\newtheorem{theorem}{Theorem}[section]
\newtheorem{corollary}[theorem]{Corollary}
\newtheorem{lemma}[theorem]{Lemma}
\newtheorem{proposition}[theorem]{Proposition}
\theoremstyle{definition}
\newtheorem{definition}[theorem]{Definition}
\newtheorem{remark}[theorem]{Remark}
\newcommand{\ep}{\varepsilon}
\newcommand{\RR}{\mathbb{R}}
\newcommand{\cc}{\mathcal{C}}
\newcommand{\NN}{\mathbb{N}}
\newcommand{\btt}{\mathbb{T}}
\newcommand{\ZZ}{\mathbb{Z}}
\newcommand{\ZZP}{\mathbb{Z}^+}
\newcommand{\cd}{\mathcal{D}}
\newcommand{\cC}{\mathcal{C}}
\newcommand{\cS}{\mathcal{S}}
\newcommand{\cs}{\mathcal{S}}
\newcommand{\rr}{\mathcal{R}}
\newcommand{\cg}{\mathcal{G}}
\newcommand{\cP}{\mathcal{P}}
\newcommand{\cp}{\mathcal{P}}
\newcommand{\cE}{\mathcal{E}}
\newcommand{\cU}{\mathcal{U}}
\newcommand{\cH}{\mathcal{H}}
\newcommand{\sk}{\mathscr{K}}
\newcommand{\cm}{\mathcal{M}}
\newcommand{\sss}{\mathscr{S}}
\newcommand{\sC}{\mathscr{C}}
\newcommand{\sH}{\mathscr{H}}
\newcommand{\cM}{\mathcal{M}}
\newcommand{\spp}{\mathscr{P}}
\newcommand{\pe}{P_{\exp}^\bot}
\newcommand{\sg}{\mathscr{G}}
\newcommand{\al}{\alpha}
\newcommand{\supp}{\mathrm{supp}}
\newcommand{\NE}{\mathrm{NE}}
\newcommand{\var}{\mathrm{Var}}
\title[Denseness of intermediate pressures]
      {Denseness of intermediate pressures for systems with the 
      Climenhaga-Thompson structures}
\author[Peng Sun]{}
\subjclass[2010]{Primary: 37A35, 37C50, 37D35.
        Secondary: 37B40, 37C40, 37D25, 37D30.}
 \keywords{ergodic
 measure, intermediate entropy, pressure, specification, gluing orbit,
 Climenhaga-Thompson structure, Ma\~ne diffeomorphism.  }
 \email{sunpeng@cufe.edu.cn}
\begin{document}

 \maketitle\ 

%\bigskip

% Enter the first author's name and address:
\centerline{\scshape Peng Sun}
\medskip
{\footnotesize
% please put the address of the first author
 \centerline{China Economics and Management Academy}
   \centerline{Central University of Finance and Economics}
   \centerline{Beijing 100081, China}
} % Do not forget to end the {\footnotesize by the sign }

\bigskip

%+Abstract
\begin{abstract}
We show that systems with the structure introduced by  Climenhaga and Thompson have
dense intermediate pressures and dense intermediate entropies of ergodic measures. The result applies to the Ma\~n\'e diffeomorphisms.
\end{abstract}
%-Abstract

%+Contents
%\tableofcontents
%-Contents

\section{Introduction}

Let $(X,d)$ be a compact metric space and $f:X\to X$ be a continuous map. 
Then $(X,f)$ is conventionally called a \emph{topological dynamical system} or just a
\emph{system}.
Denote by $\cm(X,f)$ the space of probability invariant measures of 
$(X,f)$ and by $\cm_e(X,f)$
the subset of the ergodic ones. 
It is believed that in most cases, positive topological entropy of $(X,f)$ 
implies a rich structure of $\cm(X,f)$. %In order to describe the abundance, 
A conjecture of Katok on intermediate entropies
suggests us to study the set
$$\sH(X,f):=\{h_\mu(f):\mu\in\cm_e(X,f)\}$$
of metric entropies of ergodic measures.
Katok Showed that $\sH(X,f)\supset\left[0,h(f)\right]$ for $C^{1+\al}$ surface
diffeomorphisms (cf. \cite{Ka80}) and conjectured that this holds for smooth
systems in any
dimension. In the last decade, a number of partial results have been obtained.
See \cite{Sun09, Sun10, Ures, Sun12, QS, GSW, KKK, 
LO, Burg, YZ, Sunze, LSWW, Sunintent}.

In the recent works, we find specification-like properties
powerful tools to construct invariant sets and ergodic measures. It is shown
in \cite{Sunintent} that systems satisfying approximate product property
and asymptotic entropy expansiveness must have ergodic measures of arbitrary
intermediate entropies and arbitrary intermediate pressures. In fact, in
this case, for every continuous potential $\phi$, every
$\al$ in the interval $\left(\inf\{P_\mu(\phi): \mu\in\cm(X,f)\}, P(\phi)\right)$,
where $P_\mu(\phi)$ and $P(\phi)$ denote the pressure of $\mu$ and the topological
pressure of $(X,f)$ with respect to $\phi$,
the set
$$\cm_e^\al(\phi):=\{\mu\in\cm_e(X,f): %\text{ and}
P_\mu(\phi)=\al\}$$
is a dense $G_\delta$ subset in the compact metric subspace
$$\cm(\phi,\al):=\{\mu\in\cm(X,f): \int\phi d\mu\le\al\le P_\mu(\phi)\}.$$
This result indicates that the invariant measures of such systems 
have delicate structures and we may expect stronger results than Katok's
conjecture for a broad class of systems.

Recently, Climenhaga and Thompson developed a methodology to study uniqueness
of equilibrium states for certain partially hyperbolic systems \cite{CT12,
CT14, CT, CFT, CTmane}. The core of  their theory is a decomposition of orbit
segments into pieces satisfying gluing orbit properties and pieces of low
pressures. In this article, we illustrate that such a decomposition can also
produce ergodic measures of intermediate entropies and intermediate pressures.

\begin{theorem}\label{thmain}
Let $(X,f)$
be an asymptotically entropy expansive system and $\phi:X\to\RR$ be a continuous
potential.
Denote
$$\spp(\phi):=\left\{P_\mu(\phi):\mu\in\cm_e(X,f)\right\}$$
and
%$$P^*(\phi):=\liminf_{n\to\infty}\frac{\sup\{\Phi(x,n):x\in X\}}n.$$
$$P^*(\phi):=\liminf_{n\to\infty}\sup
\left\{\frac1n\sum_{k=0}^{n-1}\phi\left(f^k(x)\right):x\in X\right\}.$$
Suppose that $(X,f,\phi)$ has the Climenhaga-Thompson structure.
% and $(X,f)$ is asymptotically entropy expansive. 
 Then $\spp(\phi)$ is dense in the interval
%$$\overline{\spp_e(\phi)}\supset
$\left[P^*(\phi), P(\phi)\right]$.
In particular, $\sH(X,f)$ is dense in $[0,h(f)]$.
\end{theorem}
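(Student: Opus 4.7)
The plan is to adapt the construction in \cite{Sunintent} to the weaker Climenhaga--Thompson setting, where only a prescribed portion of each orbit segment has a specification-like property. Fix $\al\in(P^*(\phi),P(\phi))$ and $\ep>0$; the goal is to produce $\mu\in\cm_e(X,f)$ with $|P_\mu(\phi)-\al|<\ep$. First I would take an ergodic reference measure $\nu$ with $P_\nu(\phi)>\al$, which exists by the variational principle, and pick a generic point $z$ for $\nu$ together with an integer $N$ so that every orbit segment $(f^jz,N)$ with $j$ in a set of density one realizes both $\frac1N S_N\phi(f^jz)\approx\int\phi\,d\nu$ and an exponential growth rate of $(n,\delta)$-separated sets close to $h_\nu(f)$. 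By the definition of $P^*(\phi)$, on the other side we may also choose, for arbitrarily large $m$, a single orbit segment $(y_m,m)$ whose Birkhoff average $\frac1m S_m\phi(y_m)$ is at most $P^*(\phi)+\ep/2<\al$. These segments play the role of ``low-pressure ballast.''

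The next step is the concatenation. Write orbit segments through the Climenhaga--Thompson decomposition $\cd=\cp\times\cg\times\cs$; the gluing property on $\cg$ guarantees that given any finite list of $\cg$-segments $(w_1,n_1),\dots,(w_k,n_k)$ we can shadow their concatenation (with uniformly bounded gaps $\tau$) by a true orbit, while the failure of the gluing on prefixes/suffixes in $\cp\cup\cs$ can contribute to the pressure no more than $P^*(\phi)$ per unit of time. For each large $n$ I would form words of the shape
\[
\underbrace{(f^{j_1}z,N)\cdots(f^{j_r}z,N)}_{\text{approx. $\nu$-typical}}\;(y_{m_n},m_n)\;\underbrace{(f^{j_{r+1}}z,N)\cdots}_{\text{repeat}}
\]
choosing the ratio $rN:m_n$ so that the weighted average of $P_\nu(\phi)$ and the low average from $(y_{m_n},m_n)$ lands in $(\al-\ep/4,\al+\ep/4)$, and choosing $n$ large enough that the gluing gaps and the $\cp,\cs$-tails are asymptotically negligible. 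Taking weak-$*$ and Ces\`aro limits of empirical measures on such concatenated orbits yields an invariant measure $\mu_\infty$ with $\int\phi\,d\mu_\infty$ in the target window, and a Katok/specification-style separated-set count on the $\cg$-blocks controls the entropy from below by (essentially) the $\nu$-fraction times $h_\nu(f)$, so $P_{\mu_\infty}(\phi)$ can be tuned within $\ep$ of $\al$.

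It remains to pass from $\mu_\infty$ to an ergodic measure. Here I would invoke asymptotic entropy expansiveness: it makes $\mu\mapsto h_\mu(f)$, and therefore $\mu\mapsto P_\mu(\phi)$, upper semicontinuous on $\cm(X,f)$, so the set $\cm(\phi,\al):=\{\mu:\int\phi\,d\mu\le\al\le P_\mu(\phi)\}$ is a compact face-like subset and the approximating measures above give non-empty $\cm(\phi,\al')$ for all $\al'$ near $\al$. A Baire-category / $G_\delta$-density argument of the type carried out in \cite{Sunintent} then shows that ergodic measures with $P_\mu(\phi)=\al'$ are dense inside $\cm(\phi,\al')$, yielding an ergodic $\mu$ with $P_\mu(\phi)$ within $\ep$ of $\al$. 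The statement on $\sH(X,f)$ is the special case $\phi\equiv 0$, since then $P^*(\phi)=0$ and $P(\phi)=h(f)$. I expect the main obstacle to be the bookkeeping in the middle paragraph: showing that the $\cp,\cs$ tails and the uniformly bounded but cumulative gluing gaps really contribute at most $P^*(\phi)$ per unit time to $\int\phi\,d\mu_\infty$ and at most $o(1)$ to the entropy, so that the construction hits $\al$ rather than overshooting toward $P(\phi)$ or undershooting toward $P^*(\phi)$.
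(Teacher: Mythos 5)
Your proposal diverges from the paper's argument at the single most decisive point, and the alternative route you propose has a gap that I don't see how to close. The paper does not introduce a reference measure $\nu$ and does not use any "low-pressure ballast" orbit segment. Instead it uses the definition of $P^*(\phi)$ in a much stronger way than you do: since $P^*(\phi)<\al-\eta$, there is an $N$ with $\sup\{\Phi(x,N):x\in X\}<N(\al-\eta)$ (this is equation \eqref{eq_phiest}). The consequence is that \emph{every individual term} $e^{\Phi(x,N)}$ in the partition sum over a maximal $(N,2\gamma)$-separated set $E^*\subset\cg_M^{(N)}$ is smaller than $e^{N(\al-\eta)}$, while the total sum exceeds $e^{N(\al+\eta)}$ by Lemma~\ref{pr_gmest}. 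One can therefore \emph{select a subset} $E\subset E^*$ whose partition sum lands in $(e^{N(\al-\eta)},e^{N(\al+\eta)})$, see \eqref{eq_epresest}. All points of $E$ lie in $\cg_M$, so all the concatenated blocks are genuinely good blocks, and the gluing orbit property applies to every word built from $E$. The compact invariant set $\Lambda$ is built from $Y=\bigcup Y(\sC,\rr)$ and the two-sided pressure estimate on $\Lambda$ is carried out directly on partition sums, without ever taking Ces\`aro limits of empirical measures.

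Your scheme of diluting $\nu$-typical $\cg$-blocks with a ballast block $(y_m,m)$ has two concrete problems. First, the ballast segment is an arbitrary point of $X$ and there is no reason for $(y_m,m)$ to lie in $\cg$ (or in $\cg_M$), so the gluing orbit property gives no way to insert it into the concatenation; decomposing it via $(\cp,\cg,\cs)$ does not help, because the $\cg$-piece of $(y_m,m)$ need not retain the low Birkhoff average. Second, you conflate two different smallness hypotheses: the CT structure assumes $P(\cd^c\cup\cP\cup\cS,\phi)<P(\phi)$, which bounds the \emph{pressure} contributed by prefixes and suffixes, whereas $P^*(\phi)$ is a bound on \emph{Birkhoff averages}; your statement that the $\cp,\cs$ tails "contribute to the pressure no more than $P^*(\phi)$ per unit of time" is not something the hypotheses give you. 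Finally, the Baire-category/$G_\delta$ argument you invoke at the end is unnecessary here: once one has a compact invariant $\Lambda$ with $P(\Lambda,\phi)$ pinned near $\al$ (and asymptotic entropy expansiveness makes $\mu\mapsto P_\mu(\phi)$ upper semicontinuous), Lemma~\ref{le_uppercts} directly produces an ergodic equilibrium state on $\Lambda$, so one ergodic measure with the right pressure is obtained without any density-in-a-simplex argument. In short, the obstacle you flag at the end of your proposal is real, and the way around it is not better bookkeeping on the dilution scheme but the subset-selection argument, which avoids ballast blocks entirely.
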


The Climenhaga-Thompson structure (see Definition \ref{def_ctst})
 requires an estimate
$\pe(\phi)<P(\phi)$ on the obstruction to expansivity.
We suspect that this estimate may not imply asymptotic
entropy expansiveness. If the assumption of asymptotic
entropy expansiveness is released, we have
$$\overline{\spp(\phi)}\supset
\left[\max\left\{P^*(\phi),\pe(\phi)\right\}, P(\phi)\right].$$
See Corollary \ref{co_noaee}.

\iffalse
The idea of the proof of Theorem \ref{thmain} is similar to \cite{Sunintent}.
The main novelty is that we work directly on potentials and pressures.
Theorem \ref{thmain} and its proof also indicates that in this case (and
also the case considered in \cite{Sunintent}) we have
$$\inf\{P_\mu(\phi): \mu\in\cm(X,f)\}\le P^*(\phi).$$
\fi

As an application of Theorem \ref{thmain},
we consider the Ma\~n\'e family $\cU_{A,\rho,r}$, %of diffeomorphisms,
which is a class of DA (derived from Anosov)
maps introduced by Ma\~n\'e \cite{Ma78}. They are
$C^0$ perturbations of a hyperbolic toral automorphism $A:\btt^3\to\btt^3$,
robustly transitive, and partially hyperbolic  with 1-dimensional centers.
As described in \cite{CTmane}, for each $g\in\cU_{A,\rho,\lambda}$ we assume that:
\begin{enumerate}
\item $q$ is a fixed point of $A$ and its neighborhood $B(q,\rho)$
%$B\subset\btt^3$ is an open neighborhood of a fixed point of $A$ and
is the support of the perturbation.
\item $\lambda\in[0,1]$ such that if an orbit spends a proportion at least $\lambda$
of its time outside $B(q,\rho)$, then it contracts the vectors in the center direction.
\end{enumerate}
Let $\phi$ be an $\al$-H\"older potential on $\btt^3$ and 
$$\|\phi\|_\al:=\sup\left\{\frac{|\phi(x)-\phi(y)|}{d(x,y)^\al}:x,y\in X,
x\ne y\right\}$$
be its H\"older semi-norm. It is shown in \cite{CTmane} that there is a function
$L(\rho,\lambda)$ such that $L(\rho,\lambda)\to\infty$ as $\rho,\lambda\to 0$
and $(\btt^3,g,\phi)$ has the Climenhaga-Thompson structure as long as
$\|\phi\|_\al\le L(\rho,\lambda)$.
As $g$ is entropy expansive, %we can apply 
Theorem \ref{thmain} leads to
%and obtain
the following corollary.

\begin{corollary}
Let $(\btt^3,g)$ be the a system in the Ma\~n\'e family $\cU_{A,\rho, \lambda}$
and $\phi$ be an $\al$-H\"older potential satisfying $\|\phi\|_\al\le L(\rho,\lambda)$.
Then 
$$\overline{\spp(\phi)}\supset\left[P^*(\phi), P(\phi)\right].$$
In particular, there are $\rho_0,\lambda_0>0$
such that for every $g\in\cU_{A,\rho_0,\lambda_0}$,
we have
$$\overline{\sH(\btt^3,g)}=[0,h(f)].$$
\end{corollary}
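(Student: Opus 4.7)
The plan is to apply Theorem \ref{thmain} directly, after verifying its two hypotheses in the Mañé setting, and then to specialize to the zero potential for the entropy statement. First I would observe that every $g\in\cU_{A,\rho,\lambda}$ is entropy expansive, as noted in the paragraph preceding the corollary; this is a standard consequence of partial hyperbolicity with one-dimensional center. Entropy expansiveness is strictly stronger than asymptotic entropy expansiveness, so the first hypothesis of Theorem \ref{thmain} is automatic. Second, the cited result from \cite{CTmane} provides a function $L(\rho,\lambda)$ such that whenever $\phi$ is an $\al$-Hölder potential with $\|\phi\|_\al\le L(\rho,\lambda)$, the triple $(\btt^3,g,\phi)$ carries the Climenhaga--Thompson structure. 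With both hypotheses in place, Theorem \ref{thmain} immediately yields that $\spp(\phi)$ is dense in $[P^*(\phi),P(\phi)]$, which is the first conclusion.

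For the ``in particular'' statement I would specialize to $\phi\equiv 0$. Since $L(\rho,\lambda)\to\infty$ as $\rho,\lambda\to 0$, I can choose $\rho_0,\lambda_0>0$ small enough that $L(\rho,\lambda)\ge 0$ for all $\rho\le\rho_0$ and $\lambda\le\lambda_0$. Then $\|0\|_\al=0\le L(\rho,\lambda)$, so the first part of the corollary applies to every $g\in\cU_{A,\rho_0,\lambda_0}$ with the zero potential. Unwinding the definitions, $P_\mu(0)=h_\mu(g)$ gives $\spp(0)=\sH(\btt^3,g)$; the variational principle gives $P(0)=h(g)$; and the empty sum trivially gives $P^*(0)=0$. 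Hence $[0,h(g)]\subset\overline{\sH(\btt^3,g)}$, and the reverse inclusion is just another invocation of the variational principle $h_\mu(g)\le h(g)$.

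There is really no substantive obstacle: the corollary is a pure packaging result combining Theorem \ref{thmain} with the two inputs from \cite{CTmane} (the existence of $L(\rho,\lambda)$ and its divergence as $\rho,\lambda\to 0$) and the known entropy expansiveness of the Mañé perturbations. The only mild point worth spelling out in the write-up is the specialization $P^*(0)=0$ and $P_\mu(0)=h_\mu(g)$, since the statement is phrased in terms of pressure but the entropy conclusion is phrased in terms of $\sH$.
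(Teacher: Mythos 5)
Your proof is correct and matches the paper's (implicit) argument: the paper simply states that entropy expansiveness of $g$ plus the $L(\rho,\lambda)$ criterion from \cite{CTmane} lets one invoke Theorem \ref{thmain}, and your write-up fills in exactly the steps the paper leaves to the reader, including the specialization $\phi\equiv 0$ giving $P^*(0)=0$, $P(0)=h(g)$, and $P_\mu(0)=h_\mu(g)$. The only cosmetic remark is that you do not need $L(\rho,\lambda)\ge 0$ for \emph{all} $\rho\le\rho_0$, $\lambda\le\lambda_0$ — it suffices that $L(\rho_0,\lambda_0)\ge 0$, which the divergence of $L$ guarantees for small enough parameters.
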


We believe that the space of invariant measures of a Ma\~n\'e diffeomorphism 
has a similar
structure as the one described in \cite{Sunintent}, hence $\spp(\phi)$ and
$\sH(\btt^3,g)$ include some non-degenerate intervals. However, it seems
that the denseness result we have obtained might be optimal under the current
framework. We are looking for new approaches.

Notions and results in this article naturally extends to the continuous-time case.

\section{Preliminaries}

Suppose that we are given a system $(X,f)$ and a continuous potential function
$\phi:X\to\RR$. In what follows we often omit $f$ in
the notations. Denote by 
$\ZZ^+$ the set of all positive integers and by $\NN$ the set of all nonnegative integers, i.e.
$\NN=\ZZ^+\cup\{0\}$. %Let $L:\ZZ^+\to\ZZ^+$ be a non-decreasing function.
For $n\in\ZZ^+$,  denote
%Denote 
$$\ZZ_n:=\{0,1,\cdots, n-1\}$$
and let
$\Sigma_n:=\ZZ_n^{\ZZ^+}$ be the space of all sequences in $\ZZ_n$.

\subsection{Entropy and Pressure}
\begin{definition}
Let $K$ be a subset of $X$.
For $n\in\ZZ^+$ and $\ep>0$, a subset $E\subset K$ 
is called an \emph{$(n,\ep)$-separated set} in $K$ if
for any distinct points $x,y$ in $E$, 
we have
$$d_n(x,y):=\max\left\{d(f^k(x),f^k(y)): k\in\ZZ_n\right\}>\ep.$$
%there is $k\in\{0,\cdots,n-1\}$ such that
%$$d(f^k(x),f^k(y))>\ep.$$
Denote by $\sss(K,n,\ep)$ the maximal cardinality of $(n,\ep)$-separated subsets
of $K$. 
Let
$$h(K,f,\ep):=\limsup_{n\to\infty}\frac{\ln \sss(K,n,\ep)}{n}.$$
Then the \emph{topological entropy} of $f$ on $K$ is defined as
$$h(K,f):=\lim_{\ep\to0}h(K,f,\ep)=\sup\{h(K,f,\ep):\ep>0\}.$$
In particular, $h(f):=h(X,f)$ is called the topological entropy of the system $(X,f)$.
%and usually denoted as $h(f)$.
\end{definition}

%\subsection{Pressures}

%Let $\phi:X\to\RR$ be a continuous potential function.
 For $x\in X$, 
$\ep>0$ and $n\in\ZZ^+$, denote
$$\Phi(x,n,\ep):=\sup\left\{\sum_{k=0}^{n-1}
\phi\left(f^k\left(y\right)\right):y\in B_n(x,\ep)\right\},$$
where
$$B_n(x,\ep):=\{y\in X: d_n(x,y)<\ep\}.$$
In particular, we write
$$\Phi(x,n)=\Phi(x,n,0):=\sum_{k=0}^{n-1}
\phi\left(f^k\left(x\right)\right).$$
For $Y\subset X$,
we consider the \emph{partition function}
$$\Theta(Y,\phi, n, \delta,\ep):=\sup\left\{
\sum_{x\in E}e^{\Phi(x,n,\ep)}: E\text{ is an $(n,\delta)$-separated subset
of }Y\right\}.$$
%In particular, 
Given $\cc\in X\times\NN$, denote 
$\cc^{(n)}:=\{x\in X: (x,n)\in\cd\}$.
We write 
$$\Theta(\cc,\phi, n, \delta,\ep):=\Theta(\cc^{(n)},\phi, n, \delta,\ep)
\text{ and }
%$$
\Theta(\cd,\phi,n,\delta):=\Theta(\cd,\phi, n, \delta,0).$$
Denote
$$P(\cd,\phi,\delta,\ep):=\limsup_{n\to\infty}\frac1n\ln\Theta(\cd,\phi,n,\delta,\ep)
\text{ and }P(\cd,\phi,\delta):=P(\cd,\phi,\delta,0).$$
The pressure of $\phi$ on $\cd$ is defined as
$$P(\cd,\phi):=\lim_{\delta\to 0} P(\cd,\phi,\delta).$$
In particular, for $Y\subset X$, we denote
$$P(Y,\phi,\delta,\ep):=P(Y\times\NN,\phi,\delta,\ep)$$
and $P(Y,\phi,\delta)$, $P(Y,\phi)$ analogously.
For $\cd=X\times\NN$ we obtain the usual
topological pressure for the system $(X,f)$, which shall be
simply denoted by $P(\phi,\delta,\ep)$, $P(\phi,\delta)$ and $P(\phi)$.

\begin{remark}
It holds that
$$\left|P(\cC,\phi,\delta,\ep)-P(\cC,\phi,\delta)\right|\le\var(\phi,\ep),$$
where
$$\var(\phi,\ep):=\sup\left\{\left|\phi(x)-\phi(y)\right|:d(x,y)\le\ep, x,y\in
X\right\}.$$
By continuity of $\phi$ and compactness of $X$, we have
$\var(\phi,\ep)<\infty$ for any $\ep>0$ and
$$\lim_{\ep\to 0}\var(\phi,\ep)=0.$$
Therefore it holds that
\begin{equation*}
\lim_{\ep\to0}P(\cC,\phi,\delta,\ep)=P(\cC,\phi,\delta).
\end{equation*}
\end{remark}

\begin{definition}
For $\cc\subset X\times\ZZ^+$ and $\ep>0$, 
we say that the potential $\phi$ satisfies BP$(\ep)$
%\emph{Bowen property} on $\cc$ if there is $\ep>0$ such that
on $\cc$ if
$$V(\cc,\phi,\ep):=\sup\left\{\left|\Phi(x,n)-\Phi(y,n)\right|:(x,n)\in\cc,
y\in B_n(x,\ep)\right\}<\infty.$$

We say that $\phi$ satisfies the \emph{Bowen property} on $\cc$ if 
$\phi$ satisfies BP$(\ep_0)$ on $\cc$ for some $\ep_0>0$.

%We say that $\phi$ satisfies the Bowen property on $\cc$ if there is $\ep_0>0$
%such that $\phi$ satisfies BP$(\ep_0)$ on $\cc$.
\end{definition}

\begin{remark}\label{re_pressurebowen}
If $\phi$ satisfies BP$(\ep_0)$ on $\cC$ then
$$P(\cC,\phi,\delta,\ep_0)=P(\cC,\phi,\delta).$$
\end{remark}

\begin{proposition}[Variational Principle]
%For any system $(X,f)$, we have
It holds that
$$P(\phi)=\sup\{P_\mu(\phi):\mu%\text{ is an ergodic measure of }
\in\cm_e(X,f)\},$$
where $P_\mu(\phi):=h_\mu(f)+\int\phi d\mu$ is the pressure of $\mu$ and
$h_\mu(f)$ is the metric entropy of $\mu$.

\end{proposition}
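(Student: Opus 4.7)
The approach is the classical two-step proof due to Misiurewicz, followed by a descent to ergodic measures via ergodic decomposition. First I would establish $P_\mu(\phi)\le P(\phi)$ for every $\mu\in\cm(X,f)$, and then construct a sequence of invariant measures whose pressures accumulate at $P(\phi)$, and finally extract ergodic components.

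For the upper bound, fix $\delta>0$ and a finite measurable partition $\xi$ with $\mathrm{diam}(\xi)<\delta$. For each atom $A$ of $\xi_n:=\bigvee_{k=0}^{n-1}f^{-k}\xi$, pick a representative $x_A\in A$ nearly maximizing $\Phi(\cdot,n)$. Applying Jensen's inequality (concavity of $\ln$) to the weights $\mu(A)$ yields
$$H_\mu(\xi_n)+\int\Phi(\cdot,n)\,d\mu\le\ln\sum_A e^{\Phi(x_A,n)}\le\ln\Theta(X,\phi,n,\delta),$$
since each atom of $\xi_n$ has $d_n$-diameter at most $\delta$ and thus can contain at most one member of any $(n,\delta)$-separated set. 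Dividing by $n$, letting $n\to\infty$, and then $\delta\to 0$ gives $P_\mu(\phi)\le P(\phi)$.

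For the lower bound, fix $\ep>0$ and for each $n$ take an $(n,\ep)$-separated set $E_n$ that realizes $\Theta(X,\phi,n,\ep)$. Form the Gibbs-type distributions together with their time averages,
$$\nu_n:=\frac{1}{\Theta(X,\phi,n,\ep)}\sum_{x\in E_n}e^{\Phi(x,n)}\delta_x,\qquad \mu_n:=\frac{1}{n}\sum_{k=0}^{n-1}\nu_n\circ f^{-k},$$
and pass to a subsequence along which $\mu_n\to\mu$ in the weak-$*$ topology and $\tfrac{1}{n}\ln\Theta(X,\phi,n,\ep)\to P(X,\phi,\ep)$. A routine calculation shows that the limit $\mu$ is $f$-invariant. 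The key identity, valid whenever $\alpha$ is a partition with $\mathrm{diam}(\alpha)<\ep$ so that $\alpha_n$ separates the points of $E_n$, is
$$H_{\nu_n}(\alpha_n)+\int\Phi(\cdot,n)\,d\nu_n=\ln\Theta(X,\phi,n,\ep).$$
Transferring this identity for $\nu_n$ into a bound for $\mu$ is the heart of the argument: choose a partition $\alpha$ with $\mu(\partial\alpha)=0$ and small diameter, fix a block length $q$, and use affinity of entropy together with the standard block decomposition of $\ZZ_n$ into shifted windows of length $q$ to rewrite $\tfrac{1}{n}H_{\nu_n}(\alpha_n)$ as an average of $\tfrac{1}{q}H_{\mu_n}(\alpha_q)$ up to a boundary term of order $q/n$. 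Letting $n\to\infty$ along the chosen subsequence (using upper semi-continuity of the finite-partition entropy $\mu\mapsto H_\mu(\alpha_q)$ at the chosen $\alpha$, which requires no expansivity assumption on $f$), then $q\to\infty$, and finally $\mathrm{diam}(\alpha)\to 0$, produces $h_\mu(f)+\int\phi\,d\mu\ge P(X,\phi,\ep)$. Sending $\ep\to 0$ yields an invariant $\mu$ with $P_\mu(\phi)\ge P(\phi)$, which combines with the upper bound to give equality.

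To descend to ergodic measures, decompose any $\mu\in\cm(X,f)$ as a barycentric integral $\mu=\int_\Omega\mu_\omega\,d\tau(\omega)$ over $\cm_e(X,f)$. Affinity of both metric entropy and the integral $\mu\mapsto\int\phi\,d\mu$ gives $P_\mu(\phi)=\int P_{\mu_\omega}(\phi)\,d\tau(\omega)$, so for any $\eta>0$ the set of $\omega$ with $P_{\mu_\omega}(\phi)\ge P_\mu(\phi)-\eta$ has positive $\tau$-measure and in particular is nonempty. Combined with the previous step this yields $\sup\{P_\mu(\phi):\mu\in\cm_e(X,f)\}=P(\phi)$. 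The main obstacle is the lower-bound entropy estimate in the preceding paragraph: converting the clean Gibbs-type identity for $\nu_n$ into an entropy bound for $\mu$ requires the block-decomposition estimate and a careful triple limit in $n$, $q$, and the partition diameter, and at each finite stage one only recovers $P(X,\phi,\ep)$, so the outer limit $\ep\to 0$ is essential.
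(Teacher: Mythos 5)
The paper offers no proof of this proposition: it is quoted as the classical variational principle for pressure with \cite{Walters} as the reference, and your proposal is the standard Misiurewicz argument from that reference. Your lower bound (Gibbs-type distributions on maximal $(n,\ep)$-separated sets, time averaging, the block decomposition with a partition whose atoms have $\mu$-null boundaries, the triple limit) and the final descent to ergodic measures via the ergodic decomposition and Jacobs' theorem are correct as sketched.

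There is, however, a genuine gap in your upper bound. The inequality $\ln\sum_A e^{\Phi(x_A,n)}\le\ln\Theta(X,\phi,n,\delta)$ is false as stated: the representatives $x_A$ of distinct atoms of $\xi_n$ need not be $(n,\delta)$-separated, and the fact you invoke (each atom of $\xi_n$ meets any $(n,\delta)$-separated set in at most one point) bounds separated sums by partition sums, i.e.\ it goes in the direction needed for your lower bound, not here. Concretely, for $f=\mathrm{id}$ on $[0,1]$, $\phi=0$, $\delta=\tfrac12$ and $\xi$ a partition into ten intervals of length $\tfrac1{10}$, the left-hand side equals $\ln 10$ for every $n$ while $\ln\Theta(X,0,n,\tfrac12)=\ln 2$; even the combined claim $H_\mu(\xi_n)+\int\Phi(\cdot,n)\,d\mu\le\ln\Theta(X,\phi,n,\delta)$ fails. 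In general the discrepancy is the logarithm of the number of atoms of $\xi_n$ that a single $d_n$-ball of radius $\delta$ can meet, which can grow exponentially in $n$, so it does not vanish after dividing by $n$. The classical repair (Theorem 9.10 in \cite{Walters}) replaces the atoms $A_j$ by disjoint compact subsets $B_j\subset A_j$ of almost full measure together with the small remainder $B_0$, so that when $2\delta$ is smaller than the minimal distance between the $B_j$ each $d_n$-ball of radius $\delta$ meets at most $2^n$ atoms of the refined join; this yields $h_\mu(f)+\int\phi\,d\mu\le P(\phi)+\ln 2$ (plus a conditional-entropy error controlled by the choice of the $B_j$), and the spurious $\ln 2$ is then removed by applying the inequality to the system $(X,f^m)$ with potential $\Phi(\cdot,m)$ and letting $m\to\infty$. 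With that standard device inserted, the rest of your argument goes through.
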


Readers are referred to \cite{Walters} for more details on entropies and
pressures.

\subsection{Expansiveness}

\begin{definition}
For $\ep>0$ and $x\in X$, denote
$$\Gamma_\ep(x):=\{y\in X:d(f^n(x),f^n(y))<\ep\text{ for every }n\in\NN\}.$$
Let
$$h^*(f,\ep):=\sup\{h(\Gamma_\ep(x),f):x\in X\}.$$
\begin{enumerate}

\item We say that $(X,f)$ is \emph{expansive} if there is $\ep_0>0$ such
that $\Gamma_{\ep_0}(x)=\{x\}$ for every $x\in X$.
\item We say that $(X,f)$ is \emph{entropy expansive} if there is $\ep_0>0$
such that $h^*(f,\ep_0)=0$.
\item 
We say that $(X,f)$ is \emph{asymptotically entropy expansive} if
$$\lim_{\ep\to 0}h^*(f,\ep)=0.$$
\end{enumerate}
\end{definition}

\begin{lemma}[{\cite[Lemma 2.2]{CFT}}]\label{le_eppres}
For any $\cd\subset X\times\NN$ and $\ep>0$, we have
$$P(\cd,\phi)\le P(\cd,\phi,\ep)+h^*(f,\ep)+\var(\phi,\ep).$$
\end{lemma}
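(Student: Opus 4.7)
The plan is to bound the partition function at the fine scale $\delta$ by the one at the coarser scale $\ep$, paying a multiplicative cost that captures both the oscillation of $\phi$ on $\ep$-dynamical balls and the number of $(n,\delta)$-separated points that can fit inside such a ball. Fix $\delta\in(0,\ep)$ and $n\in\ZZ^+$. Choose an $(n,\delta)$-separated set $E_n\subset\cd^{(n)}$ achieving (or nearly so) the supremum in $\Theta(\cd,\phi,n,\delta)$, and let $F_n\subset\cd^{(n)}$ be a maximal $(n,\ep)$-separated set. By maximality $F_n$ is $(n,\ep)$-spanning for $\cd^{(n)}$, so one may select $\pi\colon E_n\to F_n$ with $d_n(x,\pi(x))\le\ep$ for every $x\in E_n$. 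Setting $E_n(y):=\pi^{-1}(y)$, each $E_n(y)$ is an $(n,\delta)$-separated subset of the closed Bowen ball $\overline{B_n(y,\ep)}$, so $|E_n(y)|\le g(n,\ep,\delta)$ where
$$g(n,\ep,\delta):=\sup_{y\in X}\sss(\overline{B_n(y,\ep)},n,\delta).$$

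Since $d_n(x,\pi(x))\le\ep$, the definition of $\var(\phi,\ep)$ gives $\Phi(x,n)\le\Phi(\pi(x),n)+n\,\var(\phi,\ep)$. Summing first over the fibers $E_n(y)$ and then over $y\in F_n$,
$$\Theta(\cd,\phi,n,\delta)\le e^{n\,\var(\phi,\ep)}\,g(n,\ep,\delta)\sum_{y\in F_n}e^{\Phi(y,n)}\le e^{n\,\var(\phi,\ep)}\,g(n,\ep,\delta)\,\Theta(\cd,\phi,n,\ep).$$
Taking $\limsup\tfrac1n\ln$ on both sides gives
$$P(\cd,\phi,\delta)\le\var(\phi,\ep)+G(\ep,\delta)+P(\cd,\phi,\ep),\qquad G(\ep,\delta):=\limsup_{n\to\infty}\tfrac1n\ln g(n,\ep,\delta).$$

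The main obstacle is the Bowen-type estimate $\lim_{\delta\to 0}G(\ep,\delta)\le h^*(f,\ep)$, which says that the expansivity defect controls the entropy of $(n,\delta)$-separated points inside $\ep$-Bowen balls. I would prove it by contradiction and compactness: if $G(\ep,\delta_k)\ge h^*(f,\ep)+\eta$ along some sequence $\delta_k\to 0$ for a fixed $\eta>0$, select $y_k\in X$ and $n_k\to\infty$ together with $(n_k,\delta_k)$-separated sets $A_k\subset\overline{B_{n_k}(y_k,\ep)}$ of cardinality at least $e^{n_k(h^*(f,\ep)+\eta)}$. Passing to a subsequence with $y_k\to y_*\in X$ and diagonalizing in $k$, the defining property $d(f^j x,f^j y_k)\le\ep$ for $x\in A_k$, $0\le j<n_k$, forces the separated points to accumulate on the bi-infinite Bowen ball $\Gamma_\ep(y_*)$; counting them yields $h(\Gamma_\ep(y_*),f)\ge h^*(f,\ep)+\eta$, contradicting $h^*(f,\ep)=\sup_{x\in X}h(\Gamma_\ep(x),f)$. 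Granted this estimate, letting $\delta\to 0$ in the displayed inequality and using $P(\cd,\phi)=\lim_{\delta\to 0}P(\cd,\phi,\delta)$ yields the claim.
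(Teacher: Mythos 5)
Your reduction is the right one, and in fact it is the same route as the source the paper cites for this lemma (the paper gives no proof of its own, referring to \cite{CFT}): fiber a near-optimal $(n,\delta)$-separated set of $\cd^{(n)}$ over a maximal $(n,\ep)$-separated set, pay $n\var(\phi,\ep)$ for moving the potential to the fiber's center, bound each fiber by $g(n,\ep,\delta)=\sup_y\sss(\overline{B_n(y,\ep)},n,\delta)$, and let $\delta\to0$. All of that is correct. But this means the entire content of the lemma has been pushed into the estimate $\lim_{\delta\to0}\limsup_n\frac1n\ln g(n,\ep,\delta)\le h^*(f,\ep)$, which is precisely Bowen's entropy-expansiveness counting lemma, and your argument for it does not work.

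The contradiction-and-diagonalization sketch has a genuine gap. For $x\in A_k\subset\overline{B_{n_k}(y_k,\ep)}$ you only control $d(f^jx,f^jy_k)\le\ep$ for $0\le j<n_k$, while $y_k\to y_*$ gives $d(f^jy_k,f^jy_*)$ small only on a time window of length depending on $k$ through equicontinuity of finitely many iterates --- never on the whole window $[0,n_k)$ where the $(n_k,\delta_k)$-separation lives. So the sets $A_k$ are only trapped in finite-depth sets $\bigcap_{j=0}^{J}f^{-j}\overline B(f^jy_*,\ep+\sigma)$ with $J$ fixed and $n_k\to\infty$; they do not ``accumulate on $\Gamma_\ep(y_*)$'' in any useful sense. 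Moreover, even if they did, having many $(n_k,\delta_k)$-separated points merely near a compact set gives no lower bound on that set's topological entropy (entropy counts separated points inside the set and is not lower semicontinuous under this kind of approximation, especially with $\delta_k\to0$). The correct proof, due to Bowen, needs two steps your sketch omits: (i) a uniformity step by compactness --- for every $\gamma>0$ there are $m,n_0\in\NN$ and $C>0$ such that every set $\bigcap_{j=0}^{m}f^{-j}\overline B(f^jx,\ep)$ admits an $(n_0,\delta)$-spanning set of cardinality at most $Ce^{n_0(h^*(f,\ep)+\gamma)}$, uniformly in $x$, using that these sets decrease to $\Gamma_\ep(x)$ and $h(\Gamma_\ep(x),f)\le h^*(f,\ep)$; and (ii) a chaining step, decomposing $[0,n)$ into blocks of length $n_0$ and applying (i) along the orbit of $y$ to obtain $\sss(B_n(y,\ep),n,\delta)\le C'e^{n(h^*(f,\ep)+\gamma')}$ for all large $n$. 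Either supply these steps or cite Bowen's lemma (as \cite{CFT} does); as written, the key inequality is asserted rather than proved.
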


\begin{lemma}\label{le_uppercts}
Suppose that $(X,f)$ is asymptotically entropy expansive, then
the map $\mu\to P_\mu(\phi)$ is upper semi-continuous. In particular,
there is an ergodic measure $\mu_m$, called an equilibrium state for $\phi$,
such that $P_{\mu_m}(\phi)=P(\phi)$.
\end{lemma}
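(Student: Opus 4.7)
My plan is to reduce the statement to the classical theorem of Misiurewicz on upper semi-continuity of metric entropy, and then deduce the existence of an ergodic equilibrium state from upper semi-continuity together with the variational principle and the compactness of $\cm(X,f)$ in the weak-$*$ topology.

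First, write $P_\mu(\phi)=h_\mu(f)+\int\phi\,d\mu$. Since $\phi$ is continuous, the map $\mu\mapsto\int\phi\,d\mu$ is continuous on $\cm(X,f)$, so it suffices to prove that $\mu\mapsto h_\mu(f)$ is upper semi-continuous. This is Misiurewicz's theorem: for each $\ep>0$ one chooses a finite measurable partition $\xi$ of $X$ whose atoms have diameter less than $\ep$ and whose boundaries have zero $\mu$-measure (existence guaranteed by regularity of $\mu$). The Portmanteau theorem then yields upper semi-continuity of $\nu\mapsto h_\nu(f,\xi)$ at $\mu$. The partition-entropy analogue of Lemma \ref{le_eppres} (Bowen's local entropy estimate) gives $h_\nu(f)\le h_\nu(f,\xi)+h^*(f,\ep)$ for every invariant $\nu$; letting $\ep\to 0$ and invoking asymptotic entropy expansiveness eliminates the error term and yields upper semi-continuity of $\mu\mapsto h_\mu(f)$.

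With upper semi-continuity in hand, the existence of an ergodic equilibrium state follows routinely. The variational principle in the form stated above gives $P(\phi)=\sup\{P_\mu(\phi):\mu\in\cm_e(X,f)\}$, which by the affinity of $\mu\mapsto P_\mu(\phi)$ and the ergodic decomposition coincides with $\sup\{P_\mu(\phi):\mu\in\cm(X,f)\}$. Since $\cm(X,f)$ is compact and $\mu\mapsto P_\mu(\phi)$ is upper semi-continuous, this supremum is attained at some $\mu^*\in\cm(X,f)$. Decomposing $\mu^*=\int\nu\,d\tau(\nu)$ via the ergodic decomposition and applying affinity yields $P(\phi)=\int P_\nu(\phi)\,d\tau(\nu)$; since $P_\nu(\phi)\le P(\phi)$ for every $\nu$, we conclude that $P_\nu(\phi)=P(\phi)$ for $\tau$-a.e.\ ergodic $\nu$, producing the desired $\mu_m\in\cm_e(X,f)$.

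The main technical obstacle is the upper semi-continuity of $\nu\mapsto h_\nu(f,\xi)$ at the fixed measure $\mu$, which is where asymptotic entropy expansiveness is genuinely used and where one must exploit the fact that $\xi$ can be arranged to have $\mu$-null (rather than uniformly small) boundary. Once this is granted, the argument is a packaging of compactness, affinity, and the ergodic decomposition.
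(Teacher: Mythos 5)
Your proposal is correct and is essentially the same route the paper takes: the paper states this lemma without a proof, relying implicitly on Misiurewicz's theorem that asymptotic $h$-expansiveness implies upper semi-continuity of the entropy map (the same result appears, with citation, in a commented-out preliminary section of the source). Your reduction to Misiurewicz via the decomposition $P_\mu(\phi)=h_\mu(f)+\int\phi\,d\mu$, followed by compactness and the ergodic decomposition to extract an ergodic maximizer, is the standard argument and fills in exactly what the paper leaves implicit.
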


\begin{definition}
For $\ep>0$, denote
$$\NE(\ep):=\{x\in X: \Gamma_\ep(x)\ne\{x\}\}.$$
We say that $\mu\in\cm_e(X,f)$ is \emph{almost expansive}
if there is $\ep_0>0$ such that $\mu(\NE(\ep_0))=0$.
\end{definition}

\begin{definition}
For $\ep>0$, denote
\begin{align*}
\pe(\phi,\ep):=&\sup\{P_\mu(\phi):\mu(\NE(\ep))>0,\mu\in\cm_e(X,f)\}\\
=&\sup\{P_\mu(\phi):\mu(\NE(\ep))=1,\mu\in\cm_e(X,f)\}.
\end{align*}
The \emph{pressure of obstructions to expansivity} is defined as
$$\pe(\phi):=\lim_{\ep\to 0}\pe(\phi,\ep)=\inf\{\pe(\phi,\ep):\ep>0\}.$$
\end{definition}

%\begin{lemma}[{\cite[Proposition 3.7]{CT}}]
%If $P_{\exp}^\bot(\phi,\ep)<P(\phi)$, 
%then $P(\phi,\gamma)=P(\phi)$ for $\gamma\in(0,\frac\ep2)$.
%\end{lemma}

\begin{lemma}[{\cite[Proposition 3.7]{CT}}]\label{le_pest}
If $P_{\exp}^\bot(\phi,\ep)<P(\phi)$, 
then $P(\phi,\gamma)=P(\phi)$ for $\gamma\in(0,\frac\ep2)$.
\end{lemma}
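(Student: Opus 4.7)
The plan is to establish the non-trivial inequality $P(\phi)\le P(\phi,\gamma)$; the reverse direction is immediate since $P(\phi,\delta)$ is non-decreasing as $\delta\to 0$ and $P(\phi)=\lim_{\delta\to 0}P(\phi,\delta)$. By the variational principle, combined with the dichotomy $\mu(\NE(\ep))\in\{0,1\}$ which holds for every ergodic $\mu$ in view of the $f$-invariance of $\NE(\ep)$, I would split
$$P(\phi)=\max\left\{\sup\left\{P_\mu(\phi):\mu\in\cm_e(X,f),\,\mu(\NE(\ep))=0\right\},\ \pe(\phi,\ep)\right\}.$$
The hypothesis $\pe(\phi,\ep)<P(\phi)$ forces the first supremum to equal $P(\phi)$. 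Consequently, the lemma reduces to showing that every ergodic $\mu$ with $\mu(\NE(\ep))=0$ satisfies $P_\mu(\phi)\le P(\phi,\gamma)$.

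Fix such a measure $\mu$. For $\mu$-a.e.\ $x$ we have $\Gamma_{2\gamma}(x)\subset\Gamma_\ep(x)=\{x\}$, so $\mu$ is almost expansive at scale $2\gamma$. I would then invoke a Katok-style pressure formula: the measure-theoretic pressure $P_\mu(\phi,\eta)$, computed as the exponential growth rate of infima of $\phi$-weighted partition functions over $(n,\eta)$-spanning sets of subsets of $\mu$-measure at least $1-\delta$, satisfies $P_\mu(\phi,\eta)\le P(\phi,\eta)$ for every $\eta>0$, and $\lim_{\eta\to 0}P_\mu(\phi,\eta)=P_\mu(\phi)$. Almost expansiveness at scale $2\gamma$ forces this outer limit to stabilize already at $\eta=\gamma$, because the Bowen balls $B_n(x,\gamma)$ shrink to $\{x\}$ for $\mu$-a.e.\ $x$ and hence cannot be refined further in the relevant measure-theoretic sense. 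Combined with the trivial passage between $(n,\gamma)$-spanning and $(n,\gamma)$-separated partition sums, this yields $P_\mu(\phi)=P_\mu(\phi,\gamma)\le P(\phi,\gamma)$.

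The principal obstacle is the justification of the stabilization $P_\mu(\phi,\gamma)=P_\mu(\phi)$ under almost expansiveness at scale $2\gamma$. Although heuristically clear, the pointwise condition $\Gamma_{2\gamma}(x)=\{x\}$ does not give a uniform modulus of expansivity. I would handle this by the standard Lusin-type trick: for each $\delta>0$, choose a compact $Y\subset X$ with $\mu(Y)\ge 1-\delta$ on which the map $x\mapsto\inf\{n:\text{diam}(B_n(x,\gamma)\cap Y)<\gamma/2\}$ is bounded, so that $(n,\gamma)$-spanning sets of $Y$ become effectively as informative as $(n,\eta)$-spanning sets for all smaller $\eta$. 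Once this uniformity is secured, the weighted Brin--Katok argument, combined with the variational inequality $P_\mu(\phi,\gamma)\le P(\phi,\gamma)$ (which is immediate from the definitions), closes the gap and the claim follows.
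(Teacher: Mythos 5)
This lemma is quoted from Climenhaga--Thompson (Proposition~3.7 of \cite{CT}); the paper does not reprove it, so there is no in-text argument to compare against. Your outline reproduces the standard strategy that CT use: the easy inequality $P(\phi,\gamma)\le P(\phi)$, the ergodic dichotomy $\mu(\NE(\ep))\in\{0,1\}$, the resulting split of the variational supremum, and the reduction to showing $P_\mu(\phi)\le P(\phi,\gamma)$ for each ergodic $\mu$ with $\mu(\NE(\ep))=0$ via a Katok-type pressure formula. Those steps are all correct, and the observation that $\gamma<\ep/2$ gives $\Gamma_{2\gamma}(x)\subset\Gamma_\ep(x)=\{x\}$ a.e.\ is exactly what is used.

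Two points in the final paragraph need tightening before this is a proof. First, the Lusin-type function $x\mapsto\inf\{n:\mathrm{diam}(B_n(x,\gamma)\cap Y)<\gamma/2\}$ is circular, since $Y$ appears on both sides of its own selection; the cleaner statement is to apply Egorov to the pointwise convergence $\mathrm{diam}\big(\overline{B_n(x,\gamma)}\big)\to 0$ (which holds $\mu$-a.e.\ because $\bigcap_n\overline{B_n(x,\gamma)}\subset\Gamma_{2\gamma}(x)=\{x\}$, and here is precisely where the factor of $2$ from $\gamma<\ep/2$ is consumed), obtaining a compact $Y$ with $\mu(Y)>1-\delta$ and an $N$ with $\mathrm{diam}\big(\overline{B_N(x,\gamma)}\big)<\gamma'$ for all $x\in Y$. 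Second, the phrase ``$(n,\gamma)$-spanning sets of $Y$ become effectively as informative as $(n,\eta)$-spanning sets'' is not quite what comes out of this uniformity: what one actually gets is that an $(n,\gamma)$-separated set in $Y$ is automatically $(n-N,\gamma')$-separated, so one trades a fixed finite number of iterates for a decrease of scale, which is enough for the exponential growth rate but not a literal equivalence of spanning sets at fixed $n$. One also needs to be slightly careful about the spanning/separated mismatch, since the paper's $P(\phi,\gamma)$ and $\Theta$ are defined with separated sets while Katok's formula is naturally a spanning-set quantity; the inequality $P^K_\mu(\phi,\gamma,\delta)\le P(\phi,\gamma)$ goes through because a maximal $(n,\gamma)$-separated set in a positive-measure set is a subset of a maximal $(n,\gamma)$-separated set in $X$. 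With these corrections the sketch becomes the CT proof.
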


\begin{remark}\label{re_pest}
Let $Y$ be a compact invariant subset of $X$.
As a corollary of Lemma \ref{le_pest}, if $\pe(\phi,\ep)<P(Y,\phi)$,
then we have
$P(Y,\phi,\gamma)=P(Y,\phi)$ for $\gamma\in(0,\frac\ep2)$.
Just consider the restriction of $f$ to $Y$.
\end{remark}

\subsection{The Climenhaga-Thompson structure}
\begin{definition}\label{gapshadow}
Let $\cc=\{(x_k,m_k)\}
_{k=1}^\infty$ be a sequence in $X\times\ZZP$ and
$\rr=\{r_k\}
_{k\in\ZZ^+}$ be a sequence in $\NN$.
For $\ep>0$ and $z\in X$, we say that $(\cc,\rr)$ is \emph{$\ep$-traced} by $z$
if %the following \emph{shadowing property} holds:
for each $k\in\ZZ^+$,
\begin{equation*}%\label{eqshadow}
d(f^{t_{k}+j}(z), f^j(x_k))\le\ep\text{ for each }j=0,1,\cdots, m_k-1,
\end{equation*}
where
$$t_1=t_1(\cc,\rr):=0\text{ and }
t_k=t_k(\cc,\rr):=\sum_{i=1}^{k-1}(m_i+r_i)\text{ for }k\ge 2.$$
\end{definition}

\begin{definition}
For $\cc\subset X\times\ZZ^+$, we say that $(\cc,f)$ 
%has $(N,\delta)$ gluing orbit property,
satisfies GO$(\delta,N,\tau)$
%if %for every $\ep>0$ 
%there is $\tau=\tau(\delta)>0$ such that 
if for every sequence 
$\{(x_k,m_k)\}_{k=1}^\infty$ in $\cc$
satisfying $m_k\ge N$ for all $k$, 
there are a sequence $\rr\in\Sigma_\tau$ and $z\in X$
such that %$\max\sg\le M(\ep)$ and 
$(\cc,\rr)$ is $\delta$-traced by $z$.

We say that $(\cc,f)$ satisfies GO$(\delta)$ if there are $N_0\in\NN$ and $\tau>0$
such that $(\cc,f)$ satisfies GO$(\delta,N_0,\tau_0)$. We say that $(\cc,f)$
satisfies the \emph{gluing orbit property} if it satisfies GO$(\delta)$ for every $\delta>0$.
\end{definition}

\begin{remark}
The gluing orbit property is called tail (W)-specification in 
%the sequence of
%works 
\cite{CT} by Climenhaga and Thompson. 
We follow the name suggested by Bomfim
and Varandas \cite{BV} which may avoid ambiguity 
with the regular specification
property. Readers are also referred to 
\cite{BTV0, BTV, CLT, DGS, KLO, Sun19, Sunintent, Sununierg} 
for more discussions on gluing
orbit property and other specification-like properties, as well as various
examples.
\end{remark}

\begin{definition}\label{def_ctdecomp}
Let $\cd, \cP,\cg,\cS\subset X\times\NN$. We say that $(\cp,\cg,\cs)$ is a
\emph{CT-decomposition} for $\cd$ 
%consists of three collections
%$\cP,\cg,\cS\subset \cd$ associated with
if it is associated with three functions $p,g,s:\cd\to\NN$ such that
for every $(x,n)\in\cd$, 
%there are $p=p$, $g=g(x,n)$ and
we have

$$(x,p(x,n))\in\cP,\;
 (f^{p(x,n)}(x),g(x,n))\in\cg,\;
 (f^{p(x,n)+g(x,n)}(x),s(x,n))\in\cS,$$
and
$$p(x,n)+g(x,n)+s(x,n)=n.$$

\end{definition}

\begin{lemma}[{\cite[Lemma 2.10]{CT}}]\label{le_gm}
Let $(\cp,\cg,\cs)$ be a CT-decomposition for $\cd$
such that $\cg$ satisfies the gluing orbit property.
For $M\in\NN$, denote
$$\cg_M:=\{(x,n)\in\cd: p(x,n)\le M\text{ and } s(x,n)\le M\}.$$
Then $\cg_M$ also satisfies the gluing orbit property.
\end{lemma}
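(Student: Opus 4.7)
The plan is to reduce the gluing orbit property of $\cg_M$ to that of $\cg$, exploiting the fact that in the CT-decomposition each prefix $(x_k,p_k)\in\cP$ and suffix $(f^{p_k+g_k}(x_k),s_k)\in\cS$ has length at most $M$. The key is to absorb these short prefixes and suffixes into time shifts and gaps, while the good pieces are handled directly by the gluing orbit of $\cg$.

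Given $\delta>0$, I first use uniform continuity of $f,f^2,\dots,f^M$ to choose $\eta\in(0,\delta)$ such that $d(a,b)<\eta$ implies $d(f^i(a),f^i(b))<\delta$ for all $0\le i\le M$. Next, I invoke the gluing orbit property of $\cg$ at level $\eta$ to obtain $N_0\in\NN$ and $\tau_0>0$ so that $(\cg,f)$ satisfies GO$(\eta,N_0,\tau_0)$. I then set $N_1:=N_0+2M$ and $\tau_1:=\tau_0+2M$, which will serve as the parameters of GO$(\delta,N_1,\tau_1)$ on $\cg_M$.

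Given any sequence $\{(x_k,m_k)\}_{k\ge 1}$ in $\cg_M$ with $m_k\ge N_1$, I decompose using the CT-decomposition: $p_k:=p(x_k,m_k)\le M$, $g_k:=g(x_k,m_k)\ge N_0$, $s_k:=s(x_k,m_k)\le M$, and $y_k:=f^{p_k}(x_k)$, so that $(y_k,g_k)\in\cg$. Applying GO$(\eta,N_0,\tau_0)$ of $\cg$ to $\{(y_k,g_k)\}$ yields a tracing point $z$ and gaps $\rr^*=\{r_k^*\}\in\Sigma_{\tau_0}$ with $d(f^{\sigma_k+j}(z),f^j(y_k))\le\eta$ for $j=0,\dots,g_k-1$, where $\sigma_k:=\sum_{i<k}(g_i+r_i^*)$. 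I then align the $\cg_M$-tracing so that the good segment of $(x_k,m_k)$ coincides with the tracing of $(y_k,g_k)$ by setting $t_k:=\sigma_k-p_k$; the induced $\cg_M$-gaps become $r_k=r_k^*-s_k-p_{k+1}$, and a padding step (absorbed into $\tau_1=\tau_0+2M$) ensures $r_k\in\{0,\dots,\tau_1-1\}$.

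Verification of the $\delta$-tracing of each $(x_k,m_k)$ then splits into three regimes. The good segment of length $g_k$ is $\eta$-traced directly by construction. The suffix of length $s_k\le M$ is $\delta$-traced by applying forward uniform continuity to the $\eta$-closeness at the end of the good segment. The hardest part, which I expect to be the main obstacle, is the prefix of length $p_k\le M$: forward uniform continuity cannot extend the good-segment tracing backward because $f$ need not be invertible. The technical crux is to use the exact identity $f^{p_k}(x_k)=y_k$ together with the freedom in choosing the tracing point supplied by the gluing orbit of $\cg$ (through a limit or compactness argument that selects, among candidate tracing points, one whose orbit at time $t_k$ lies within $\eta$ of $x_k$), so that forward continuity then $\delta$-traces the prefix as well. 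Once this backward control is in place, concatenating across $k$ yields a single $z\in X$ that $\delta$-traces $\{(x_k,m_k)\}$ with gaps in $\Sigma_{\tau_1}$, completing the proof.
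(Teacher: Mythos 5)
Your high-level plan — decompose each $(x_k,m_k)\in\cg_M$ into a short prefix, a long good piece $(y_k,g_k)\in\cg$, and a short suffix, apply the gluing orbit of $\cg$ at a finer scale $\eta$ to the good pieces, and transfer closeness to the short ends via uniform continuity — is the right skeleton and is essentially what Climenhaga--Thompson do. But the two steps you flag as delicate are precisely where your argument does not close, and in both cases the gap is concrete rather than merely technical. On the gap arithmetic: aligning the $k$-th window of the $\cg$-tracing with the good piece of $x_k$ forces $t_k=\sigma_k-p_k$, hence $r_k=r_k^*-s_k-p_{k+1}$. Since the $\cg$-gluing orbit is free to return $r_k^*=0$ while $s_k+p_{k+1}$ can be as large as $2M$, these induced gaps can be negative, i.e.\ consecutive $\cg_M$-windows would overlap, and a gap sequence must lie in $\NN$. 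Enlarging the bound to $\tau_1=\tau_0+2M$ does nothing to repair this; padding would shift the $t_k$, and then the $\cg$-tracing no longer lines up with the good pieces. You need to \emph{force} the $\cg$-gaps to be at least $2M$, for instance by interleaving dummy elements $(d_k,N_0')\in\cg$ of length $N_0'\ge 2M$ between consecutive $(y_k,g_k)$ before invoking GO$(\eta,N_0,\tau_0)$; then $r_k\ge N_0'-2M\ge 0$, at the cost of a larger $\tau_1$. Your proposal omits this.

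On the prefix: you correctly observe that forward continuity cannot be pushed backward, but your proposed remedy — a compactness argument ``selecting among candidate tracing points'' one whose past happens to shadow $x_k$ — has no support in the gluing orbit property, which produces a single tracing point with no control over its backward orbit. You also leave $t_1=\sigma_1-p_1=-p_1<0$ unaddressed, so there is no valid tracing point at all unless the whole scheme is shifted. The resolution, in the Climenhaga--Thompson setting where $f$ is a homeomorphism (and likewise for the Ma\~n\'e diffeomorphisms to which this paper applies the lemma), is to take $z:=f^{-p_1}(z^*)$ and to choose $\eta$ so small that $d(a,b)<\eta$ implies $d(f^{-i}a,f^{-i}b)<\delta$ for $0\le i\le M$ (backward as well as forward uniform continuity); then the closeness $d(f^{\sigma_k}(z^*),y_k)\le\eta$ pulls back to $\delta$-closeness along the entire prefix of length $p_k\le M$. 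Without invertibility the prefix step genuinely fails, so the concern you raise is real, but the compactness sketch is not the mechanism that resolves it.
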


\begin{definition}\label{def_ctst}
We say the $(X,f,\phi)$ has the \emph{Climenhaga-Thompson structure}
if there is $\cd\in X\times\NN$ and a CT-decomposition $(\cp,\cg,\cs)$ for
$\cd$
such that the following hold:
\begin{enumerate}
\item $(\cg,f)$ satisfies the gluing orbit property;
\item $P(\cd^c\cup\cP\cup\cS,\phi)<P(\phi)$;
\item $\phi$ has the Bowen property on $\cg$;
\item $P_{\exp}^\bot(\phi)<P(\phi)$.
\end{enumerate}
\end{definition}

\begin{definition}
Let $\delta,\gamma,\ep>0$.
We say the $(X,f,\phi)$ satisfies CT$(\delta,\gamma,\ep)$
if $$16\delta<8\gamma<\ep$$
and
 there is $\cd\in X\times\NN$ and a CT-decomposition $(\cp,\cg,\cs)$ for
$\cd$
such that the following hold:
%\begin{enumerate}
%\item $\sg$ satisfies the gluing orbit property;
%\item $P(\cd^c\cup\cP\cup\cS,\phi)<P(\phi)$;
%\item $\phi$ has the Bowen property;
%\end{enumerate}

\begin{enumerate}
\item $(\cg_M,f)$ satisfies GO$(\delta)$ for every $M\in\NN$;
\item $P(\cd^c,\phi,2\gamma,2\gamma)<P(\phi)$;
\item $P(\cP\cup\cS,\phi,\gamma,3\gamma)<P(\phi)$;
%has $(N,\delta)$ gluing orbit property
%for some $N$ 

\item  $\phi$ satisfies BP$(3\gamma)$ on $\cg$;
\item $P_{\exp}^\bot(\phi,\ep)<P(\phi)$.
%has the Bowen property on $\cg$ at scale $3\gamma$.
\end{enumerate} 

\end{definition}

\begin{remark}\label{re_ctimct}
By  Remark \ref{re_pressurebowen} and Lemma \ref{le_gm}, 
if $(X,f,\phi)$ has the Climenhaga-Thompson structure,
then there is $\ep_0>0$ such that  
%$\delta,\gamma,\ep>0$ such that 
$(X,f,\phi)$ satisfies CT$(\delta,\gamma,\ep)$ whenever
$$16\delta<8\gamma<\ep<\ep_0.$$
%This is implied by  Remark \ref{re_pressurebowen} and Lemma \ref{le_gm}. 
%and \ref{}
\end{remark}

\begin{lemma}[{\cite[Proposition 6.9]{CT}}]
\label{pr_gmest}
Suppose that $(X,f,\phi)$ satisfies CT$(\delta,\gamma,\ep)$.
% and $P_{\exp}^\bot(\phi,\ep)<P(\phi)$.
Then there are $M$, $N_1$ and $C_0>0$ such that
for $n\ge N_1$, we have
$$\Theta(\cg_M, 2\gamma, n)\ge C_0e^{nP(\phi)}.$$
\end{lemma}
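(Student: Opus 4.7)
The plan is to first bound the full topological partition sum $\Theta(X\times\NN,\phi,n,2\gamma)$ from below at rate $e^{nP(\phi)}$, and then to use the CT-decomposition together with the two pressure gap hypotheses to show that all this mass, except an exponentially negligible part, must come from $\cg_M$ once $M$ is taken large enough.

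The starting lower bound is obtained as follows. Since $16\delta<8\gamma<\ep$ forces $2\gamma<\ep/2$, condition (5) together with Lemma \ref{le_pest} gives $P(\phi,2\gamma)=P(\phi)$. A standard padding argument, using a near-maximal $(k,2\gamma)$-separated set for a large fixed $k$ and extending via the gluing orbit property on $\cg_M$, upgrades the $\limsup$ in the definition of $P(\phi,2\gamma)$ to a lower bound
$$\Theta(X\times\NN,\phi,n,2\gamma)\ge C_1 e^{nP(\phi)}$$
valid for all sufficiently large $n$. I would then fix a near-maximal $(n,2\gamma)$-separated set $E\subset X$ and split $E=(E\cap\cd^c)\sqcup\bigsqcup_{p+g+s=n}E_{p,g,s}$, where $E_{p,g,s}$ collects those $x\in E\cap\cd$ whose CT-decomposition has prefix/gap/suffix lengths $(p,g,s)$. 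Condition (2) bounds the contribution of $E\cap\cd^c$ to the partition sum by $C_2 e^{n(P(\phi)-\eta)}$ for some $\eta>0$, which is exponentially negligible.

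For each $(p,g,s)$ a combinatorial bound using the Bowen property BP$(3\gamma)$ on $\cg$, which allows one to swap $e^{\Phi(x,n)}$ against a product of Birkhoff sums on the three pieces with bounded loss, gives
$$\sum_{x\in E_{p,g,s}}e^{\Phi(x,n)}\le C_3\,\Theta(\cP,\phi,p,\gamma,3\gamma)\cdot\Theta(\cg,\phi,g,2\gamma)\cdot\Theta(\cS,\phi,s,\gamma,3\gamma).$$
Condition (3) yields exponential decay $e^{-p\eta'}$ and $e^{-s\eta'}$ on the $\cP$- and $\cS$-factors respectively, so once $M$ is chosen large enough the total contribution from triples with $\max(p,s)>M$ is bounded by $C_4 e^{n(P(\phi)-\eta'')}$. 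Subtracting the $\cd^c$- and $\max(p,s)>M$-contributions from $C_1 e^{nP(\phi)}$, dominating each remaining $\Theta(\cg,\phi,g,2\gamma)$ by $e^{(p+s)\|\phi\|_\infty}\Theta(\cg_M,\phi,n-p-s,2\gamma)$, and using a polynomial count on admissible $(p,s)\le M$, yields the desired $\Theta(\cg_M,\phi,n,2\gamma)\ge C_0 e^{nP(\phi)}$ for $n\ge N_1$.

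The hard part will be the geometric/combinatorial splitting step that converts a single $(n,2\gamma)$-separation of an orbit segment into separations of its three pieces at the scales dictated by conditions (3) and (4), without losing the $e^{nP(\phi)}$ factor in the accumulated constants. The precise hierarchy $16\delta<8\gamma<\ep$ is engineered for exactly this purpose: separations at scale $2\gamma$ on the middle $\cg$-piece must persist as separations at scale $\gamma$ on the $\cP$- and $\cS$-pieces after absorbing a Bowen-type distortion of size $\var(\phi,3\gamma)$, and the $\delta$-gluing on $\cg_M$ must remain compatible with all these scales.
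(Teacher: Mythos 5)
The paper does not prove this lemma itself --- it cites it directly from [CT, Proposition 6.9] --- so what follows compares your reconstruction against the Climenhaga--Thompson argument. Your plan correctly identifies the main machinery: take a maximal separated set, split it by the CT-decomposition into $\cd^c$, large-$(p,s)$, and small-$(p,s)$ pieces, discard the first two via conditions (2) and (3), and recognize the remainder as a separated set inside $\cg_M$.

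The genuine gaps are in the bookends of the argument. The opening step is circular as stated: you propose to establish the uniform lower bound $\Theta(X\times\NN,\phi,n,2\gamma)\ge C_1e^{nP(\phi)}$ for all large $n$ by gluing through $\cg_M$, but at that stage nothing guarantees $\cg_M^{(k)}$ contains a rich $(k,2\gamma)$-separated set to seed the gluing --- that is precisely what the lemma asserts. Lemma \ref{le_pest} only gives $\limsup_n\frac1n\log\Theta(X,\phi,n,2\gamma)=P(\phi)$, i.e.\ a sequence of good lengths, not a uniform bound. The correct order first combines that subsequential bound with the decomposition and a pigeonhole in $(p,s)$ to produce \emph{one} length $n_0$ with a large separated set inside $\cg_M^{(n_0)}$, and only then glues. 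There is also an unaddressed scale issue: gluing $(n_0,2\gamma)$-separated segments with $\delta$-tracing yields points that are only $(n,2\gamma-2\delta)$-separated, so the gluing lower-bounds $\Theta(X,\phi,n,2\gamma-2\delta)$, which does \emph{not} dominate $\Theta(X,\phi,n,2\gamma)$; keeping the lemma's target scale $2\gamma$ through the whole argument requires a more careful hierarchy than your sketch provides. Finally, the closing ``dominate $\Theta(\cg,\phi,g,2\gamma)$ by $e^{(p+s)\|\phi\|_\infty}\Theta(\cg_M,\phi,n-p-s,2\gamma)$'' step is an unneeded detour --- after the subtraction, the surviving points of the maximal separated set already lie in $\cg_M^{(n)}$ and directly witness the bound --- and the central combinatorial splitting estimate is flagged as the hard part but not carried out, so the proposal remains a plan with real holes rather than a proof.
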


%\subsection{More Facts}

%For $\ep>0$, denote
%$$\var(\phi,\ep):=\sup\left\{\left|\phi(x)-\phi(y)\right|:d(x,y)\le\ep, x,y\in
%X\right\}.$$

%Denote
%$$P^*(\phi):=\liminf_{n\to\infty}\frac{\sup\{\Phi(x,n):x\in X\}}n.$$
%Denote
%$$P^{\bot}(\phi):=\min\{P^*(\phi),\pe(\phi)\}$$

\section{Invariant Sets with Pressure Estimates}

In this section we prove the following proposition, which is the core
ingredient
of the proof of the main results. 

%\begin{lemma}
%There is a constant $C>0$ such that
%for every $n>n_0$, we have
%$$\Theta(\sg^M,\gamma,n)>$$
%\end{lemma}

\begin{proposition}\label{pr_lambda}
Suppose that $(X,f,\phi)$ satisfies CT$(\delta,\gamma,\ep)$.
%Assume that 
%\begin{equation}
%\phi^*<\al<P(\phi).
%\end{equation}
Then for every $\al\in(P^*(\phi),P(\phi))$ and every $\eta_0>0$,
there is a compact invariant set $\Lambda$ such that
$$|P(\Lambda,\phi,2\delta)-\al|<\eta_0.$$
\end{proposition}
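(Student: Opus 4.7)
The plan is to construct $\Lambda$ as an invariant set modeled on concatenations of orbit segments drawn from $\cg_M$, with building blocks forming a carefully chosen weighted subset $F$ of a large $(n_1,2\gamma)$-separated subset of $\cg_M^{(n_1)}$. By tuning the total weight $W_F:=\sum_{x\in F}e^{\Phi(x,n_1)}$ to be approximately $e^{n_1\al}$, the resulting invariant set will have pressure close to $\al$.

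I would first fix $\eta_1>0$ with $\eta_1<\min\{\al-P^*(\phi),\,P(\phi)-\al\}$ and set $a_1:=P(\phi)-\eta_1$, $a_2:=P^*(\phi)+\eta_1$, so that $a_2<\al<a_1$. Lemma~\ref{pr_gmest} provides $M,N_1$ and $C_0>0$ with $\Theta(\cg_M,2\gamma,n)\ge C_0 e^{nP(\phi)}$ for $n\ge N_1$, and since $(\cg_M,f)$ satisfies GO$(\delta)$ there exist $N_0,\tau$ realizing GO$(\delta,N_0,\tau)$. Using the $\liminf$ in the definition of $P^*(\phi)$, pick $n_1\ge\max\{N_0,N_1\}$ large enough that $C_0\ge e^{-n_1\eta_1}$ and that $\Phi(x,n_1)\le n_1 a_2$ for every $x\in X$. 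Then there exists an $(n_1,2\gamma)$-separated subset $E\subset\cg_M^{(n_1)}$ with $\sum_{x\in E}e^{\Phi(x,n_1)}\ge e^{n_1 a_1}$. Selecting elements of $E$ greedily in decreasing order of $\Phi(\cdot,n_1)$ until the partial sum first exceeds $e^{n_1\al}$ produces $F\subset E$ with $W_F\in[e^{n_1\al},\,2e^{n_1\al}]$, since every summand is bounded by $e^{n_1 a_2}<e^{n_1\al}$.

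For each $\xi\in F^{\ZZ^+}$, GO$(\delta,N_0,\tau)$ applied to $\{(\xi_k,n_1)\}_{k=1}^\infty$ yields gaps $\rr(\xi)\in\Sigma_\tau$ and a $\delta$-tracing point $z(\xi)$. I set $\Lambda_0:=\{z(\xi):\xi\in F^{\ZZ^+}\}$ and $\Lambda:=\bigcap_{N\ge 0}\overline{\bigcup_{k\ge N}f^k(\Lambda_0)}$, a compact $f$-invariant subset of $X$. For the lower bound, two sequences $\xi,\xi'\in F^{\ZZ^+}$ first disagreeing at some position $j\le K$ produce tracings satisfying $d(f^{t_j+l}(z(\xi)),f^{t_j+l}(z(\xi')))\ge 2\gamma-2\delta>2\delta$ for some $l\in\ZZ_{n_1}$, because $E$ is $(n_1,2\gamma)$-separated, tracings lie within $\delta$ of their sequences, and $\gamma>2\delta$ follows from $16\delta<8\gamma$. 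Hence the tracings of distinct prefixes in $F^K$ form an $(N_K,2\delta)$-separated set with $N_K\le K(n_1+\tau)$. The Bowen property BP$(3\gamma)$ on $\cg$ extends to $\cg_M$ up to an additive constant $V_1$ depending only on $M$ (since the $\cP$- and $\cS$-pieces on $\cg_M$ have length at most $M$), yielding $\Phi(z(\xi),N_K)\ge\sum_{i=1}^K\Phi(\xi_i,n_1)-K(V_1+\tau\|\phi\|_\infty)$. Summing over prefixes in $F^K$ gives
\[
\Theta(\Lambda,\phi,N_K,2\delta)\ge W_F^K\,e^{-KC_1},\qquad C_1:=V_1+\tau\|\phi\|_\infty,
\]
so letting $K\to\infty$ yields $P(\Lambda,\phi,2\delta)\ge(\ln W_F-C_1)/(n_1+\tau)\ge\al-O(1/n_1)$.

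The reverse estimate will run symmetrically: every $z\in\Lambda$ traces with error $\delta$ some sequence in $F^{\ZZ^+}$ up to a shift, and any two tracings of the same sequence are within $2\delta$ in the $d_{N_K}$-metric by the triangle inequality. Thus in any $(N,2\delta)$-separated $E'\subset\Lambda$ with $N\approx K(n_1+\tau)$, the assignment $z\mapsto(\xi_1,\dots,\xi_K)$ is injective, giving $|E'|\le|F|^K$ and $\sum_{z\in E'}e^{\Phi(z,N)}\le W_F^K\,e^{KC_2}$ by the Bowen property once more. Hence $P(\Lambda,\phi,2\delta)\le\al+O(1/n_1)$, and taking $n_1$ large enough (depending on $\eta_0$) forces $|P(\Lambda,\phi,2\delta)-\al|<\eta_0$. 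The hard part will be making this upper bound fully rigorous: one must verify that every point of $\Lambda\setminus\Lambda_0$ (a limit of forward iterates of tracings) can still be coded by a unique $F$-pattern up to shift and limit, and handle boundary effects when $N$ is not aligned with the block lengths $n_1+r_i(\xi)$.
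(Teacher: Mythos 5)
Your overall strategy --- choosing a block set with weight near $e^{n_1\alpha}$, concatenating via the gluing orbit property, and estimating pressure from both sides --- is the same as the paper's. However, there are genuine gaps in the execution, chiefly in how you handle the fact that the gap sequences $\rr(\xi)$ depend on $\xi$.

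\textbf{Separation in the lower bound.} You claim that if $\xi$ and $\xi'$ first disagree at position $j$, then $d(f^{t_j+l}(z(\xi)),f^{t_j+l}(z(\xi')))\ge 2\gamma-2\delta$ for some $l\in\ZZ_{n_1}$. But the $j$-th block of $z(\xi)$ sits at time $t_j(\rr(\xi))$ while the $j$-th block of $z(\xi')$ sits at $t_j(\rr(\xi'))$, and these need not be equal even when $\xi$ and $\xi'$ agree on a long prefix --- the gluing orbit property gives \emph{some} gap for each infinite sequence, not a canonical one. When $t_j(\rr(\xi))\neq t_j(\rr(\xi'))$ the comparison is between different time windows and the $(n_1,2\gamma)$-separation of $E$ gives you nothing. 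The paper resolves this with a pigeonhole: for each $n$ it fixes one gap pattern $(v_1,\dots,v_{n-1})\in\ZZ_\tau^{n-1}$ that captures at least a $\tau^{-(n-1)}$ fraction of the total weight $\bigl(\sum_{x\in E}e^{\Phi(x,N)}\bigr)^n$, and then works only with tracing points sharing that fixed gap pattern (see Lemma~\ref{le_lowpresest} and display~\eqref{eq_ecylpresest}). This factor $\tau^{n-1}$ is then absorbed because $N$ was chosen large compared to $\tau$ (conditions~\eqref{eq_nnest3}--\eqref{eq_nnest5}). You omit this step entirely, and without it the $(N_K,2\delta)$-separated set you describe simply does not exist.

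\textbf{Injectivity in the upper bound.} You assert that a $(N,2\delta)$-separated $E'\subset\Lambda$ codes injectively into $F^K$. This is false: two points tracing the \emph{same} blocks with the \emph{same} gap pattern can still be $2\delta$-separated, because the tracing condition constrains nothing during the gap intervals. The paper's Lemma~\ref{uppnepest} handles exactly this by showing that the number of $(nN,2\delta)$-separated tracings of one block/gap pattern is at most $\sss(X,\tau,\delta)^{n-1}$, and the conditions on $N$ again make this factor harmless. Your bound $|E'|\le|F|^K$ skips this count.

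\textbf{Choice of $\Lambda$.} Defining $\Lambda:=\bigcap_{N\ge 0}\overline{\bigcup_{k\ge N}f^k(\Lambda_0)}$ creates a further problem: this set need not contain $\Lambda_0$, so your separated sets (which live in $\Lambda_0$) do not furnish a lower bound for $\Theta(\Lambda,\dots)$. Replacing it by $\overline{\bigcup_{k\ge 0}f^k(\Lambda_0)}$ fixes inclusion but worsens the upper bound since you must now control the closure and all iterates. The paper avoids both problems by letting $Y$ be the set of \emph{all} $\delta$-tracing points over all $\sC$ and all $\rr$ (rather than a choice $z(\xi)$), proving $Y$ is compact via a diagonal argument on $E^{\ZZP}\times\Sigma_\tau$, observing the shift relation $f^{t_2(\rr)}(Y(\sC,\rr))\subset Y(\sigma_1\sC,\sigma_2\rr)$, and then taking $\Lambda:=\bigcup_{k=0}^{N+\tau-1}f^k(Y)$, which is visibly compact, $f$-invariant, and contains $Y$. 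I would recommend adopting this definition of $Y$ and $\Lambda$; it makes the invariance, compactness, and two-sided pressure estimates all tractable simultaneously.
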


\begin{proof}
See Lemma \ref{le_lowpresest} and Lemma \ref{le_upppresest}.
\end{proof}

\subsection{Construction}
We may assume that 
$$P^*(\phi)<\al-\eta_0<\al+\eta_0<P(\phi).$$
Let 
$$\eta:=\min\{\frac{\eta_0}{5},\frac{\al}{5}\}>0.$$ 
Denote
$$\phi^-:=\min\{\phi(x):x\in X\},$$
%\text{ and }
$$\phi^+:=\max\{\phi(x):x\in X\},$$ 
and
$$\var(\phi):=\phi^+-\phi^-.$$
We may assume that 
$$\phi^+\ge\phi^-\ge 0.$$ 
Otherwise, we can replace $\phi$
by $\phi-\phi^-$ then all conditions and results are parallel, 
as $P(\phi+c)=P(\phi+c)$ for any constant $c\in\RR$.
Let $M, N_1, C_0$ be as in Lemma \ref{pr_gmest}.
Suppose that $\cg_M$ satisfies GO$(\delta, N_0,\tau)$.

As $P^*(\phi)<\al-\eta$, %there are infinitely many $n$ such that %for $n>N_1$ we have
%there are infinitely many $n$ such that $
we are able to find  and fix $N\in\NN$ such that
\begin{equation}\label{eq_phiest}
\sup\{\Phi(x,N):x\in X\}<N(\al-\eta)
%\Phi(x,n)<e^{np}\text{ for every $x\in X$}.
\end{equation}
and $N$ is large enough such that the following holds:
\begin{align}\label{eq_nnest1}
&N>\max\{N_0, N_1\},\\
\label{eq_nnest2} 
&C_0e^{N\eta}>1,\;
e^{2N\eta}>2,\\
\label{eq_nnest3}
&N>\frac{\al\tau}{\eta}>\tau,\\
\label{eq_nnest4}
&N\eta>V(\cg,\phi,\delta)+2M\var(\phi),\\
\label{eq_nnest5}
&N\eta>\max\{\tau\phi^+,\ln\tau+\ln\sss(X,\tau,\delta)\}\ge\tau\phi^-.
%\left|\tau\phi^--\tau\right|,
%\\\label{eq_nnest6}
%&N\eta>\tau\phi^+.
\end{align}
Note that \eqref{eq_nnest3} guarantees that
%holds if and only if
\begin{equation}\label{eq_nnest3eq}
N(\al-\eta)>(N+\tau)(\al-2\eta)
\end{equation}

By Lemma \ref{pr_gmest} and \eqref{eq_nnest1}, we have
$$\Theta(\cg_M, 2\gamma, N)\ge C_0e^{NP(\phi)}.$$
There is an $(N,2\gamma)$-separated set $E^*\in\cg_M^{(N)}$ such that
$$\sum_{x\in E^*}e^{\Phi(x,N)}>C_0e^{N(\al+\eta_0)}>e^{N(\al+\eta)}.$$
By \eqref{eq_phiest} and \eqref{eq_nnest2}, 
%we can find an $(N,2\gamma)$-separated
we can find a subset $E$ of $E^*$ such that
%e^{NP^*(\phi)}<
\begin{equation}\label{eq_epresest}
e^{N(\al-\eta)}<\sum_{x\in E}e^{\Phi(x,N)}<e^{N(\al+\eta)}.
\end{equation}

Let $\sC=\{x_k(\sC)\}_{k=1}^\infty$ be a sequence in $E$.
Denote
$\tilde\sC:=\{(x_k(\sC),N)\}_{k=1}^\infty$.
Then $\tilde\sC$ is a sequence in $\cg_M$. As $\cg_M$ satisfies GO$(\delta, N_0,\tau_0)$
and \eqref{eq_nnest1} holds, there are a sequence
$\rr%=\{r_k(\rr)\}_{k=1}^\infty
\in\Sigma_{\tau}$ and $y\in X$ such that
$(\tilde\sC,\rr)$ is $\delta$-traced by $y$.
For $\rr=\{r_k(\rr)\}_{k=1}^\infty\in\Sigma_{\tau}$, we denote
$$t_1(\rr):=0\text{ and }
t_k(\rr):=\sum_{i=1}^{k-1}(N+r_i(\rr))\text{ for }k\ge 2.$$

Denote
$$Y(\sC,\rr):=\{y\in X:\text{$(\tilde\sC,\rr)$ is $\delta$-traced by $y$}\}.$$
It is possible that $Y(\sC,\rr)=\emptyset$. But for each $\sC\in E^{\ZZP}$
there is $\rr\in\Sigma_{\tau_0}$ such that $Y(\sC,\rr)\ne\emptyset$.
Let
$$Y:=\bigcup_{\sC\in E^{\ZZP}, \rr\in\Sigma_{\tau}}Y(\sC,\rr).$$

\begin{lemma}
$Y$ is compact.

\end{lemma}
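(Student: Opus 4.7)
Since $X$ is compact, it suffices to show that $Y$ is closed. The plan is to take an arbitrary convergent sequence $y_n\to y^*$ with $y_n\in Y$ and produce data $(\cC^*,\rr^*)$ with $y^*\in Y(\cC^*,\rr^*)$ by a diagonal/compactness argument, passing to the limit in the tracing inequality using continuity of $f$.

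For each $n$, choose $\cC_n\in E^{\ZZP}$ and $\rr_n\in\Sigma_\tau$ with $y_n\in Y(\cC_n,\rr_n)$. The key observation is that the index set $E^{\ZZP}\times\Sigma_\tau$ is compact in the product topology: $E$ is finite since it is an $(N,2\gamma)$-separated subset of the compact space $X$, and $\ZZ_\tau$ is finite by construction, so both $E^{\ZZP}$ and $\Sigma_\tau=\ZZ_\tau^{\ZZP}$ are homeomorphic to Cantor spaces. Hence, after passing to a subsequence, we may assume $\cC_n\to\cC^*$ in $E^{\ZZP}$ and $\rr_n\to\rr^*$ in $\Sigma_\tau$; in the product topology this means that for every fixed $k$, there exists $n_k$ such that $x_k(\cC_n)=x_k(\cC^*)$ and $r_k(\rr_n)=r_k(\rr^*)$ for all $n\ge n_k$. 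In particular, for each such $k$ the partial sums $t_k(\cC_n,\rr_n)$ stabilize to $t_k(\cC^*,\rr^*)$ eventually.

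Now fix $k\in\ZZ^+$ and $j\in\{0,1,\ldots,N-1\}$. For all sufficiently large $n$ we have
\[
d\bigl(f^{t_k(\rr^*)+j}(y_n),\ f^j(x_k(\cC^*))\bigr)\le\delta
\]
by the definition of $\delta$-tracing of $(\tilde\cC_n,\rr_n)$ by $y_n$. Since $f$ is continuous, $f^{t_k(\rr^*)+j}(y_n)\to f^{t_k(\rr^*)+j}(y^*)$, so letting $n\to\infty$ preserves the inequality and yields
\[
d\bigl(f^{t_k(\rr^*)+j}(y^*),\ f^j(x_k(\cC^*))\bigr)\le\delta.
\]
As $k$ and $j$ were arbitrary, this shows $(\tilde\cC^*,\rr^*)$ is $\delta$-traced by $y^*$, so $y^*\in Y(\cC^*,\rr^*)\subset Y$. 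Therefore $Y$ is closed in $X$, hence compact.

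There is no real obstacle here; the only point requiring care is that tracing is a countable family of conditions, so convergence of $(\cC_n,\rr_n)$ must be taken in the product topology, which is exactly what the finiteness of $E$ and $\ZZ_\tau$ provides.
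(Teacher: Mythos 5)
Your proof is correct and follows essentially the same route as the paper's: extract a convergent subsequence of the indexing data $(\cC_n,\rr_n)$ using compactness of the symbolic spaces $E^{\ZZ^+}$ and $\Sigma_\tau$, observe that convergence there means eventual coordinate-wise stabilization, and pass to the limit in each tracing inequality via continuity of $f$. (Incidentally, you correctly use $\delta$ in the limiting inequality where the paper's displayed line has a typo writing $\ep$.)
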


\begin{proof}
Let $\{z_i\}_{i=1}^\infty$ be a sequence in $Y$ such that $z_i\to z$ in $X$.
We need to show that $z\in Y$.

Assume that $z_i\in Y(\sC_i,\rr_i)$
for each $i$.
Note that both $E^{\ZZP}$ and $\Sigma_\tau$ are compact metric %spaces as
symbolic spaces.
%By compactness of $\Sigma_M$, 
We can find a subsequence $\{n_j\}_{j=1}^\infty$, 
$\sC\in E^{\ZZP}$ and $\rr\in\Sigma_\tau$
such that 
$$\sC_{n_j}\to\sC\text{ and }\rr_{n_j}\to\rr.$$
For each $k\in\ZZ^+$, %there is $\delta_k>0$ such that
there is $M_k$ such that for every $n_j>M_k$,
we have
$$x_k(\sC_{n_j})=x_k(\sC)\text{ and }r_k(\rr_{n_j})=r_k(\rr).$$
%\text{ and }A_k(y_{n_j})=A_k
%\text{ for each }l=1,\cdots,k
%.$$
%Then for any $n_j, n_{j'}>\max\{N_k, N_k'\}$, we asserts that
%$y_{n_j}, y_{n_{j'}}$ are not $(kM(1+\delta_1),\ep)$-separated. Hence
%we must have
%$$x_k(\sC_{n_j})=x_k(\sC_{n_{j'}})=:x_k$$ and
%For each $\tau\in A_k$,  we have
This implies that 
\begin{align*}
d\left(f^{t_k(\rr)+l}\left(z\right), f^l\left(x_k\left(\sC\right)\right)\right)
=\lim_{n_j\to\infty} d\left(f^{t_k(\rr_{n_j})+l}\left(z_{n_j}\right), 
f^l\left(x_k\left(\sC_{n_j}\right)\right)\right)
%d(f^{t_k(\xi_{n_j})+\tau}(y_{n_j}), f^\tau(x_k(\sC_{n_j})))
%\\&=\lim_{k\to\infty} d(f^{s_j(k)+l}(x_{n_k}),f^l(p))\text{ (for $k>N$)}
%\\&
\le\ep
\end{align*}
for each $l=0,\cdots, N-1$.
Hence $z\in Y(\sC,\rr)\subset Y$.
%for $\tilde\sC:=\{x_k\}_{k=1}^\infty$.
%Hence $(\sC_p, \sS_\tau, \sg)$ can be $\ep$-shadowed by $x$.
\end{proof}

Denote by $\sigma_1$ the shift map on $E^{\ZZP}$ and by $\sigma_2$ the shift
map on $\Sigma_\tau$.
%\begin{lemma}\label{shiftinv}
For every $\sC\in E^{\ZZP}$ and $\rr\in\Sigma_\tau$, we have
\begin{equation}\label{eq_shiftinv}
f^{t_2(\rr)}(Y(\sC,\rr))\subset Y(\sigma_1(\sC),\sigma_2(\rr)).
\end{equation}
%\end{lemma}

\begin{lemma}\label{cptinv}
%Let $(X,f)$ be a GO system with positive topological entropy. Then for
%every $h>0$, there is a compact invariant set $\Lambda$ such that for each 
%$n\in\ZZ^+$, we have
%$$s(\Lambda,n\tau,2\ep)\le(\tau+M)M^n|E_M|^n$$
%For $Y=Y(\tau,\ep)$, let
Let
$$\Lambda:=\bigcup_{k=0}^{N+\tau-1} f^k(Y).$$
Then $\Lambda$ is a compact $f$-invariant subset. %in $Z_{TM,3\eta}$. 

\end{lemma}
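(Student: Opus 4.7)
The plan is to prove compactness and $f$-invariance of $\Lambda$ separately, both reducing quickly to facts we have already established. Compactness is essentially automatic: the previous lemma shows $Y$ is compact, and since $f$ is continuous, each $f^k(Y)$ is a continuous image of a compact set, hence compact. A finite union of compact sets is compact, so $\Lambda$ is compact.

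For invariance, I would reduce the claim $f(\Lambda)\subset\Lambda$ to the single inclusion $f^{N+\tau}(Y)\subset\Lambda$, since
\[
f(\Lambda)=\bigcup_{k=1}^{N+\tau}f^k(Y)=\Bigl(\bigcup_{k=1}^{N+\tau-1}f^k(Y)\Bigr)\cup f^{N+\tau}(Y)\subset\Lambda\cup f^{N+\tau}(Y).
\]
So the content is to push points in $f^{N+\tau}(Y)$ back into some $f^j(Y)$ with $0\le j\le N+\tau-1$.

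The key tool is the shift-invariance relation \eqref{eq_shiftinv}: if $y\in Y(\sC,\rr)$, then $f^{t_2(\rr)}(y)\in Y(\sigma_1(\sC),\sigma_2(\rr))\subset Y$. Since $\rr\in\Sigma_\tau$, we have $r_1(\rr)\in\{0,1,\dots,\tau-1\}$ and therefore $t_2(\rr)=N+r_1(\rr)\in\{N,N+1,\dots,N+\tau-1\}$. Writing $r:=r_1(\rr)$, I would then split
\[
f^{N+\tau}(y)=f^{\tau-r}\!\bigl(f^{N+r}(y)\bigr)\in f^{\tau-r}(Y).
\]
Because $\tau-r\in\{1,2,\dots,\tau\}$ and (using $N\ge 1$) $\tau\le N+\tau-1$, the exponent $\tau-r$ lies in the index range of $\Lambda$, so $f^{\tau-r}(Y)\subset\Lambda$. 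This gives $f^{N+\tau}(y)\in\Lambda$ for every $y\in Y$, hence $f(\Lambda)\subset\Lambda$.

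There is no real obstacle here; the only point requiring a touch of care is bookkeeping the range of the index $k$ in the union defining $\Lambda$ so that both $N+r$ and $\tau-r$ stay in $\{0,\dots,N+\tau-1\}$ for every admissible $r$. Once this is verified, compactness and invariance both follow immediately, and the lemma is proved.
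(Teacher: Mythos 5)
Your proof is correct and follows essentially the same approach as the paper's: both reduce invariance to pushing the "last" iterate of $Y$ back into $\Lambda$ via the shift relation \eqref{eq_shiftinv}, with $t_2(\rr)=N+r_1(\rr)$ providing the needed re-entry point into $Y$. Your version cleanly isolates the reduction to $f^{N+\tau}(Y)\subset\Lambda$ at the outset (the paper instead argues pointwise by cases on $l$), but the key step and the bookkeeping on the index range $\{0,\dots,N+\tau-1\}$ are the same.
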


\begin{proof}

We have that $\Lambda$ is compact since $Y$ is compact.

For every $z\in\Lambda$, there is $y\in Y$ and $l\in\{0,\cdots,N+\tau-1\}$ such that
$f^l(y)=z$. 
If $l<N+\tau-1$, then 
$$f(z)=f^{\tau+1}(y)\in f^{\tau+1}(Y)\subset\Lambda.$$

Assume that $l=N+\tau-1$ and $y\in Y(\sC,\rr)$. 
%Note that
%$M\le t_2(\xi)\le M+M_1$ and by Lemma \ref{shiftinv}, 
By \eqref{eq_shiftinv}, we have
$f^{t_2(\rr)}(y)\in Y$ and $N\le t_2(\rr)\le N+\tau$.
Hence %if $\tau=M+M_1-1$, then
$$f(z)=f^{l+1}(y)=f^{N+\tau-t_2(\rr)}(f^{t_2(\rr)}(y))\in
f^{N+\tau-t_2(\rr)}(Y)\subset\Lambda.$$
This implies that $f(\Lambda)\subset\Lambda$.
\end{proof}

\subsection{Lower Estimate of Pressure}

\begin{lemma}\label{ngammasep}
Suppose that $y\in Y(\sC,\rr)$ and $y'\in Y(\sC',\rr')$
such that
%\begin{align*}
$$t_n(\rr)=t_n(\rr')
\text{ and }
x_n(\sC)\ne x_n(\sC').$$
%\end{align*}
Then $y,y'$ are 
$(n(N+\tau),\gamma)$-separated. 
\end{lemma}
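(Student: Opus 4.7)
The plan is to identify a single time index at which the two tracing points $y$ and $y'$ are forced apart by the separation of the anchor points $x_n(\sC) \ne x_n(\sC')$ in the original $(N,2\gamma)$-separated set $E$, then apply the triangle inequality and absorb the tracing error $\delta$ into the bound using the relation $16\delta < 8\gamma$ coming from CT$(\delta,\gamma,\ep)$.

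First I would record that $E$ is an $(N,2\gamma)$-separated subset of $\cg_M^{(N)}$, so since $x_n(\sC)$ and $x_n(\sC')$ are distinct elements of $E$, there exists some $j \in \{0,1,\ldots,N-1\}$ with
\[
d\bigl(f^j(x_n(\sC)),\, f^j(x_n(\sC'))\bigr) > 2\gamma.
\]
Set $t := t_n(\rr) = t_n(\rr')$. By the $\delta$-tracing conditions $y\in Y(\sC,\rr)$ and $y'\in Y(\sC',\rr')$ applied at the $n$-th segment,
\[
d\bigl(f^{t+j}(y),\, f^j(x_n(\sC))\bigr) \le \delta
\quad\text{and}\quad
d\bigl(f^{t+j}(y'),\, f^j(x_n(\sC'))\bigr) \le \delta.
\]

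A direct application of the triangle inequality then yields
\[
d\bigl(f^{t+j}(y),\, f^{t+j}(y')\bigr) \ge 2\gamma - 2\delta,
\]
and the hypothesis $16\delta<8\gamma$ forces $\delta<\gamma/2$, so the right-hand side exceeds $\gamma$.

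The only bookkeeping step left is to verify that $t+j$ lies in the index range $\{0,1,\ldots,n(N+\tau)-1\}$ required by the definition of $(n(N+\tau),\gamma)$-separation. Since each $r_i(\rr)\in\ZZ_\tau$ satisfies $r_i\le \tau-1$, one gets
\[
t = \sum_{i=1}^{n-1}\bigl(N + r_i(\rr)\bigr) \le (n-1)(N+\tau),
\]
so $t+j \le (n-1)(N+\tau) + (N-1) < n(N+\tau)$, which completes the verification. The whole argument is essentially a two-line triangle-inequality computation; the only place that demands care is the indexing estimate, and even there the fact that $r_i<\tau$ makes the bound immediate, so I do not anticipate a serious obstacle.
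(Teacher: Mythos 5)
Your proof is correct and follows essentially the same route as the paper: both arguments identify the $(N,2\gamma)$-separation of $x_n(\sC)$ and $x_n(\sC')$ in $E\subset E^*$, apply the tracing bound $\delta$ at time $t_n(\rr)=t_n(\rr')$, use the triangle inequality to get $2\gamma-2\delta>\gamma$ (using $2\delta<\gamma$ from the CT condition), and check that the relevant time index lies below $n(N+\tau)$. Your bookkeeping bound $t\le(n-1)(N+\tau)$ matches the paper's estimate $t_n(\rr)+N\le(n-1)(N+\tau)+N<n(N+\tau)$.
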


\begin{proof}
The tracing property implies that
$$d_N(f^{t_n(\rr)}(y), x_n(\sC))\le\delta
\text{ and }d_N(f^{t_n(\rr')}(y'), x_n(\sC'))\le\delta.$$
As $x_n(\sC), x_n(\sC')\in E\subset E^*$ and $x_n(\sC)\ne x_n(\sC')$, 
they are $(N,2\gamma)$-separated.
Hence we have
\begin{align*}
d_N(f^{t_n(\rr)}(y), f^{t_n(\rr')}(y'))>2\gamma-2\delta>\gamma.
\end{align*}
As $t_n(\rr)=t_n(\rr')$ and $t_n(\rr)+N\le (n-1)(N+\tau)+N<n(N+\tau)$,
this implies that $y,y'$ are 
$(n(N+\tau),\gamma)$-separated. 
\end{proof}

\begin{lemma}\label{le_lowpresest}
$P(Y,\phi,\gamma)\ge\al-\eta_0$.
\end{lemma}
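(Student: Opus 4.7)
The plan is to construct, for each large $n$, an $(n(N+\tau),\gamma)$-separated set inside $Y$ whose partition sum is close to $e^{nN(P(\phi))}$, and then use the choices \eqref{eq_nnest1}--\eqref{eq_nnest5} to conclude.

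First I would define, for each finite string $(x_1,\dots,x_n)\in E^n$, an associated tracing point in $Y$. Extend the string to an infinite sequence $\sC\in E^{\ZZP}$ (say by repeating a fixed reference element after index $n$). By the construction of $Y$, there exist $\rr(\sC)\in\Sigma_\tau$ and a point $y(\sC)\in Y(\sC,\rr(\sC))$. This gives a map $\Psi:E^n\to Y\times\Sigma_\tau$ sending $(x_1,\dots,x_n)$ to $(y(\sC),\rr(\sC))$. Group the image by the first $n-1$ gaps: there are at most $\tau^{n-1}$ possible tuples $(r_1(\rr),\dots,r_{n-1}(\rr))$, so by a pigeonhole on the weighted sum $(x_1,\dots,x_n)\mapsto\prod_{k=1}^n e^{\Phi(x_k,N)}$, there is some fixed prefix $\rr_0\in\ZZ_\tau^{n-1}$ whose preimage $B\subset E^n$ satisfies
\[
\sum_{(x_1,\dots,x_n)\in B}\prod_{k=1}^n e^{\Phi(x_k,N)}\;\ge\;\frac{1}{\tau^{n-1}}\biggl(\sum_{x\in E}e^{\Phi(x,N)}\biggr)^{\!n}\;>\;\frac{e^{nN(\al-\eta)}}{\tau^{n-1}},
\]
using \eqref{eq_epresest}. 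Let $S:=\{y(\sC):(x_1,\dots,x_n)\in B\}$.

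Next I would verify that $S$ is $(n(N+\tau),\gamma)$-separated. For any two distinct elements of $B$, let $k\le n$ be the first index where they differ. Since their gap prefixes agree at $\rr_0$, $t_k(\rr)=t_k(\rr')$, and Lemma~\ref{ngammasep} makes the corresponding tracing points $(k(N+\tau),\gamma)$-separated, hence $(n(N+\tau),\gamma)$-separated. In particular the points are distinct, so $|S|=|B|$. To bound the pressure sum from below I would transfer the weight from $E^n$ to $S$ via the Bowen property. For each $k$, the tracing estimate gives $f^{t_k(\rr)}(y)\in B_N(x_k,\delta)$ with $(x_k,N)\in\cg_M\subset\cg$, so by BP$(3\gamma)$ (and $\delta<3\gamma$) we have
\[
\bigl|\Phi\bigl(f^{t_k(\rr)}(y),N\bigr)-\Phi(x_k,N)\bigr|\le V(\cg,\phi,\delta)=:V<\infty.
\]
Decomposing $[0,t_n(\rr)+N)$ into the $n$ tracing blocks of length $N$ and $n-1$ gap blocks of length $r_k\le\tau-1$, and using $\phi\ge\phi^-\ge0$ for both the gap contributions and the tail from $t_n(\rr)+N$ to $n(N+\tau)$, yields
\[
\Phi\bigl(y,n(N+\tau)\bigr)\;\ge\;\sum_{k=1}^n\Phi(x_k,N)-nV.
\]
Combining with the pigeonhole bound gives
\[
\Theta\bigl(Y,\phi,n(N+\tau),\gamma\bigr)\;\ge\;\sum_{y\in S}e^{\Phi(y,n(N+\tau))}\;\ge\;e^{nN(\al-\eta)-nV-(n-1)\ln\tau}.
\]

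Finally, dividing by $n(N+\tau)$ and letting $n\to\infty$ gives
\[
P(Y,\phi,\gamma)\;\ge\;\frac{N(\al-\eta)-V-\ln\tau}{N+\tau}.
\]
Rearranging, the desired bound $P(Y,\phi,\gamma)\ge\al-\eta_0\ge\al-5\eta$ is equivalent to $4N\eta-\tau(\al-5\eta)\ge V+\ln\tau$. By \eqref{eq_nnest3} we have $\tau\al<N\eta$, by \eqref{eq_nnest4} we have $V<N\eta$, and by \eqref{eq_nnest5} we have $\ln\tau<N\eta$, so the left side exceeds $3N\eta$ and the right side is below $2N\eta$; the inequality is established. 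The main obstacle in this argument is calibrating the pigeonhole loss $\tau^{n-1}$ against the Bowen distortion $V$ and the error $N\eta$ built into $N$; everything is arranged precisely so that \eqref{eq_nnest3}--\eqref{eq_nnest5} absorb these terms.
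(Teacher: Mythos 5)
Your overall strategy mirrors the paper's: pigeonhole over the $\tau^{n-1}$ gap prefixes to extract a heavy subfamily $B\subset E^n$, use Lemma~\ref{ngammasep} to get $(n(N+\tau),\gamma)$-separation, transfer the Birkhoff sums from $E^n$ to the tracing points via the Bowen property, and then balance the $\ln\tau$ loss and Bowen distortion against $N\eta$ using \eqref{eq_nnest3}--\eqref{eq_nnest5}. That is the same route the paper takes.

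There is, however, a genuine gap in the distortion estimate. You write that $(x_k,N)\in\cg_M\subset\cg$, and then apply BP$(3\gamma)$ to bound $|\Phi(f^{t_k}(y),N)-\Phi(x_k,N)|$ by $V(\cg,\phi,\delta)$. The inclusion $\cg_M\subset\cg$ is false: $\cg_M$ is the set of $(x,n)\in\cd$ whose prefix and suffix lengths $p(x,n),s(x,n)$ are at most $M$. It is not true that $(x,n)\in\cg$. The Bowen property is assumed only on $\cg$, so it applies only to the middle piece $(f^{p(x_k,N)}(x_k),g(x_k,N))$ of each orbit segment, not to the whole length-$N$ block. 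To fix this, one must split each block as $\Phi(\cdot,N)=\Phi(\cdot,p)+\Phi(f^p(\cdot),g)+\Phi(f^{p+g}(\cdot),s)$ (this is exactly the paper's \eqref{eq_gonvar}): the middle piece is controlled by BP$(3\gamma)$, while the prefix and suffix pieces are controlled trivially by their bounded length $\le M$ and the global oscillation, giving the extra $2M\var(\phi)$. The resulting bound is $V(\cg,\phi,\delta)+2M\var(\phi)$, not $V(\cg,\phi,\delta)$, and it is precisely this quantity that \eqref{eq_nnest4} requires to be $<N\eta$. With that correction your closing arithmetic goes through unchanged.
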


\begin{proof}
For each finite sequence $(y_1,\cdots,y_n)\in E^n$, denote
\begin{equation}\label{eq_c1}
C_{y_1\cdots y_n}^1:=\{\sC\in E^{\ZZP}: x_k(\sC)=y_k\text{ for each } j=1,\cdots,n\}.
\end{equation}
For each finite sequence $(w_1,\cdots,w_n)\in(\ZZ_\tau)^n$, denote
\begin{equation}\label{eq_c2}
C_{w_1\cdots w_n}^2:=\{\rr\in\Sigma_\tau: r_k(\rr)=w_k\text{ for each } j=1,\cdots,n\}.
\end{equation}
Denote
$$Y(C_{y_1\cdots y_n}^1, C_{w_1\cdots w_{n-1}}^2):=
\bigcup_{\sC\in C_{y_1\cdots y_n}^1, \rr\in C_{w_1\cdots w_{n-1}}^2}Y(\sC,\rr)$$
and
$$E(w_1,\cdots, w_{n-1}):=\left\{(y_1,\cdots, y_n)\in E^n: 
Y(C_{y_1\cdots y_n}^1, C_{w_1\cdots w_{n-1}}^2)\ne\emptyset\right\}.$$

For each $n\in\ZZP$, 
%we have$$\sum_{}$$
there are $\tau^{n-1}$ elements in $\ZZ_\tau^{n-1}$.
By \eqref{eq_epresest} and the pigeonhole principle, there must be an element $(v_1,\cdots,v_{n-1})\in(\ZZ_\tau)^{n-1}$ such that
\begin{align}
\sum_{(y_1,\cdots, y_n)\in E(v_1,\cdots, v_{n-1})}(\prod_{k=1}^n e^{\Phi(y_k,N)})
\ge&\frac{\sum\limits_{(y_1,\cdots, y_n)\in E^n}(\prod_{k=1}^n e^{\Phi(y_k,N)})}
{\tau^{n-1}}\notag\\
=&\frac{\left(\sum_{x\in E}e^{\Phi(x,N)}\right)^n}{\tau^{n-1}}\notag\\
\label{eq_ecylpresest}\ge&\frac{e^{nN(\al-\eta)}}{\tau^{n-1}}.
\end{align}

For each $(y_1,\cdots, y_n)\in E(v_1,\cdots, v_{n-1})$, we can find a point
$$z=z(y_1,\cdots, y_n)\in Y(C_{y_1\cdots y_n}^1, C_{v_1\cdots v_{n-1}}^2).$$
As $\phi$ satisfies BP$(3\gamma)$ on $\cg$ and 
$y_k\in\cg_M$, by \eqref{eq_nnest4},
we have for each $k=1,\cdots, n$,
\begin{align}
&\left|\Phi\left(f^{t_k}(z),N\right)-\Phi\left(y_k,N\right)\right| \notag
\\\le&\left|\Phi(f^{t_k}(z),p(y_k,N))-\Phi(y_k,p(y_k,N))\right| \notag
\\&+\left|\Phi(f^{t_k+p(y_k,N)}(z),g(y_k,N))-\Phi(f^{p(y_k,N)}(y_k),g(y_k,N))\right|
\notag
\\&+\left|\Phi(f^{t_k+g(y_k,N)}(z),s(y_k,N))-\Phi(f^{g(y_k,N)}(y_k),s(y_k,N))\right|
\notag
\\ \le&V(\cg,\phi,\delta)+2M\var(\phi) \notag
\\\label{eq_gonvar}\le& N\eta,
\end{align}
where
$t_k:=\sum_{i=1}^{k-1}(N+v_i)$
and
$p,g,s$ are functions associated with the CT-decomposition $(\cp,\cg,\cs)$ as in Definition
\ref{def_ctdecomp}.
Hence, we have
\begin{align}
&\Phi\left(z,n\left(N+\tau\right)\right)\notag\\
=&
\sum_{k=1}^n\Phi\left(f^{t_k}(z),N\right)
+\sum_{k=1}^{n-1}\Phi\left(f^{t_k+N}\left(z\right),v_k\right)
+\Phi\left(f^{t_n+N}\left(z\right),\left(n\tau-\sum_{k=1}^{n-1}v_k\right)\right)
%\\\ge&\sum_{k=1}^n\left(\Phi\left(y_k,N\right)-\left|\Phi(f^{t_j+M}(z),N-2M)-\Phi(f^M(y_j),N-2M)\right|
%\\&-\right)+n\tau\phi_m
\notag\\\ge&\sum_{k=1}^n\left(\Phi\left(y_k,N\right)
-N\eta\right)
+n\tau\phi^- \notag
\\\label{eq_phizest}
\ge&\sum_{k=1}^n\Phi\left(y_k,N\right)+
n\left(\tau\phi^--N\eta\right)
\end{align}

Lemma \ref{ngammasep} implies that
$$E':=\{z(y_1, \cdots, y_n):(y_1,\cdots, y_n)\in E(v_1,\cdots, v_{n-1})\}$$
is an $(n(N+\tau),\gamma)$-separated set in $Y$. 
By \eqref{eq_phizest} and \eqref{eq_ecylpresest}, we have
\begin{align*}
%&\Theta(\Lambda,\phi,n(N+\tau),\gamma)\\\ge
&\Theta(Y,\phi,n(N+\tau),\gamma)
\\\ge&\sum_{z\in E'}e^{\Phi(z,n(N+\tau))}
\\\ge&\sum_{(y_1,\cdots, y_n)\in E(v_1,\cdots, v_{n-1})}
e^{\sum_{k=1}^n\Phi\left(y_k,N\right)+
n\left(\tau\phi^--N\eta\right)}
\\\ge&\frac{e^{nN(\al-\eta)}}{\tau^{n-1}}
e^{n\left(\tau\phi^--N\eta\right)}
\end{align*}

By \eqref{eq_nnest4}, \eqref{eq_nnest5} and \eqref{eq_nnest3eq}, we have
\begin{align*}
P(Y,\phi,\gamma)\ge&\limsup_{n\to\infty}
\frac{\ln\Theta(Y,\phi,n(N+\tau),\gamma)}{n(N+\tau)}
\\\ge&\frac{1}{N+\tau}\left(
N(\al-\eta)+\tau\phi^--N\eta-\ln\tau
\right)
\\\ge&(\al-2\eta)-\eta-\eta
\\>&\al-\eta_0.
\end{align*}

\end{proof}

\subsection{Upper Estimate of Pressure}

\begin{lemma}\label{uppnepest}
For   every $n\in\ZZ^+$, 
every $C_{y_1\cdots y_n}^1$ as in \eqref{eq_c1}
and  every 
$C^2_{w_1\cdots w_{n-1}}$ as in \eqref{eq_c2}
, there are at most
$\sss(X,\tau,\delta)^{n-1}$ points in 
$Y(C_{y_1\cdots y_n}^1, C_{w_1\cdots w_{n-1}}^2)$ that are $(nN,2\delta)$-separated.
\end{lemma}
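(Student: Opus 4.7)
The plan is to label each point $z \in Y(C_{y_1\cdots y_n}^1, C_{w_1\cdots w_{n-1}}^2)$ by a tuple in a finite set of size at most $\sss(X,\tau,\delta)^{n-1}$, and to show that two points sharing a label cannot be $(nN,2\delta)$-separated. First I would fix a maximal $(\tau,\delta)$-separated subset $F \subset X$, so $|F| = \sss(X,\tau,\delta)$; by maximality, $F$ is $(\tau,\delta)$-spanning, i.e.\ for every $x \in X$ there is $a \in F$ with $d_\tau(x,a) \le \delta$. Writing $t_k := \sum_{i=1}^{k-1}(N+w_i)$, for each $z$ and each $k = 1,\ldots,n-1$ I would choose $a_k(z) \in F$ with $d_\tau(f^{t_k+N}(z), a_k(z)) \le \delta$. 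The label $(a_1(z), \ldots, a_{n-1}(z)) \in F^{n-1}$ then takes at most $\sss(X,\tau,\delta)^{n-1}$ values.

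The core step is the claim that if $z, z' \in Y(C_{y_1\cdots y_n}^1, C_{w_1\cdots w_{n-1}}^2)$ share a label, then $d_{nN}(z,z') \le 2\delta$. I would verify this index by index: every $j$ with $0 \le j \le nN-1$ lies in one of two kinds of intervals. If $j \in [t_k, t_k+N-1]$ for some $1 \le k \le n$ (a tracing window), both $z$ and $z'$ lie within $\delta$ of $f^{j-t_k}(y_k)$ because both $\delta$-trace the same orbit sequence, so the triangle inequality yields $d(f^j(z), f^j(z')) \le 2\delta$. If instead $j \in [t_k+N, t_{k+1}-1]$ for some $1 \le k \le n-1$ (a gap window of length $w_k \le \tau-1$), the label equality $a_k(z) = a_k(z')$ and the choice of $a_k$ force both $f^{t_k+N+\ell}(z)$ and $f^{t_k+N+\ell}(z')$ to lie within $\delta$ of $f^\ell(a_k)$ for all $0 \le \ell \le \tau-1$, and in particular at $\ell = j - t_k - N$. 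Coverage of $[0, nN-1]$ by these windows is automatic because $t_n + N = nN + \sum_{i=1}^{n-1} w_i \ge nN$, so the union of the tracing and gap intervals already extends past $nN - 1$.

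By contrapositive, distinct points of any $(nN, 2\delta)$-separated subset of $Y(C_{y_1\cdots y_n}^1, C_{w_1\cdots w_{n-1}}^2)$ must carry distinct labels, bounding its cardinality by $\sss(X,\tau,\delta)^{n-1}$. The only places that require a little bookkeeping are the coverage check at the tail and the conversion from the maximality of $F$ into the $(\tau,\delta)$-spanning property used to define the labels; neither should present a genuine obstacle, so the argument is expected to go through cleanly.
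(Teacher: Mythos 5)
Your proof is correct and follows essentially the same route as the paper: in the paper's proof one fixes a maximal $(\tau,\delta)$-separated (hence $(\tau,\delta)$-spanning) set $S$, assigns to each tracing point the chosen spanning elements $z_1,\dots,z_{n-1}$ in the gap windows, and argues via the triangle inequality that the resulting coding map $\pi$ into a product set $\Omega$ of size $\le \sss(X,\tau,\delta)^{n-1}$ is injective on $(t_n+N,2\delta)$-separated sets, finishing with $t_n+N\ge nN$. Your presentation as a labeling function rather than an explicit map $\pi:Q\to\Omega$ is only cosmetic, and your window-by-window triangle inequality check, the spanning-from-maximality observation, and the coverage estimate are exactly the ingredients the paper uses.
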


\begin{proof}
Let $S$ be a fixed $(\tau,\delta)$-separated subset of $X$ with the maximal
cardinality. 
Then $|S|=\sss(X,\tau,\delta)$ and
\begin{equation}\label{eq_taudelspan}
X\subset\bigcup_{x\in S} B_\tau(x,\delta).
\end{equation}

%Fix any $\rr\in C_{w_1\cdots w_{n-1}}^2$.
Denote 
$$t_k:=\sum_{i=1}^{k-1}(N+w_i).$$
We define a set
$$\Omega:=\prod_{i=0}^{t_n+N-1}\Omega_i$$
such that
\begin{align*}
\Omega_i:=\begin{cases}
\{f^{i-t_k}(y_k)\}, &\text{ if } 0\le i-t_k<N\text{ for some }k;\\
f^{i-t_k-N}(S), & \text{ if } t_k+N\le i<t_{k+1}\text{ for some }k.
%\text{ otherwise}.
\end{cases}
\end{align*}
Note that in the latter case we have 
$$i-t_k-N\in\ZZ_{\tau}$$ as
$$t_{k+1}=t_k+N+w_k\le t_k+N+\tau.$$

Let $Q$ be an $(t_n+N,2\delta)$-separated set in 
$Y(C_{y_1\cdots y_n}^1, C_{w_1\cdots w_{n-1}}^2)$.
Then for every $y\in Q$, for each $k=1,\cdots, n-1$, by \eqref{eq_taudelspan}, there is
$z_k\in S$ such that $f^{t_k}(y)\in B_\tau(z_k,\delta)$.
Denote 
$$\pi(y)=(\pi(y)_1,\cdots,\pi(y)_{t_{n+1}(\xi)-1})
\text{ such that }\pi(y)_i\in\Omega_{i}\text{ for each }i$$
and for each $k=1,\cdots, n-1$ and $l\in\ZZ_{w_k}$ define
$$\pi(y)_{t_k+N+l}:=f^l(z_k).$$
Then $\pi:Q\to\Omega$ defines a function such that
$$d(f^i(y),\pi(y)_i)\le\delta\text{ for each }i=0,1\cdots, t_n+N-1.$$
As $Q$ is $(t_n+N,2\delta)$-separated, $\pi$ is an injection.
Hence, we have
$$|Q|\le|\Omega|\le|S|^{n-1}=\sss(X,\tau,\delta)^{n-1}.$$
%As $t_{k+1}(\xi)\le nM(1+\delta_1)$
%By Lemma \ref{qndelta}, there are at most $Q(M,\delta_2)^{n}$ $n$-tuples in $\Delta^{n}$.
%satisfying \eqref{nofntuple}. 
%So
%the maximal cardinality of  an $(t_{n+1}(\xi),2\ep)$-separated set in $Y_{\sC,\xi}$
%is at most
%$$(Q(M,\delta_2)r(\ep)^{\delta_2M}r(M_1,\ep))^{n}.$$
The result follows as $t_n+N\ge nN$.
\end{proof}

\begin{lemma}\label{le_cylpresest}
Let $n>10$. Denote
\begin{equation}\label{eq_thetan}
\theta_n:=\lfloor\frac{(n-4)N}{N+\tau} \rfloor
\end{equation}
Then for each $r\in\ZZ_{N+\tau}$ and each $l\in\ZZ_N$, we have
\begin{align*}
&\Theta\left(f^r\left(Y(C_{y_1\cdots y_n}^1, C_{w_1\cdots w_{n-1}}^2)\right), 
\phi, (n-3)N+l,2\delta\right)
\\\le&\sss(X,\tau,\delta)^{n-1}e^{\sum_{k=3}^{\theta_n}
\Phi\left(y_k,N\right)+2nN\eta+5N\phi^+}.
%{\sum_{k=3}^{\theta_n}\Phi\left(y_k,N\right)+\theta_nN\eta+(n-\theta_n)N\phi^+}.
\end{align*}

\end{lemma}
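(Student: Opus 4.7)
The plan is to combine a cardinality bound for $((n-3)N + l, 2\delta)$-separated subsets of $f^r(Y(C^1_{y_1 \cdots y_n}, C^2_{w_1 \cdots w_{n-1}}))$ with a uniform upper bound on the exponent $\Phi(f^r(z), (n-3)N + l)$ for $z \in Y(C^1_{y_1 \cdots y_n}, C^2_{w_1 \cdots w_{n-1}})$. For the cardinality, I take any such separated set $E = f^r(E')$ and observe that if $f^r(z), f^r(z') \in E$ differ at some iterate $j \in \{0, \ldots, (n-3)N+l-1\}$, then $z, z'$ themselves differ at iterate $r + j < nN$, using that $r \le N + \tau - 1$, $l \le N - 1$, and $\tau < N$ from \eqref{eq_nnest3}. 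Hence $E'$ is $(nN, 2\delta)$-separated in $Y(C^1_{y_1 \cdots y_n}, C^2_{w_1 \cdots w_{n-1}})$, and Lemma \ref{uppnepest} yields $|E| = |E'| \le \sss(X, \tau, \delta)^{n-1}$.

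For the weight estimate I decompose $\Phi(f^r(z), (n-3)N + l) = \sum_{j=r}^{r + (n-3)N + l - 1} \phi(f^j(z))$ according to the good intervals $[t_k, t_k + N)$ and gaps $[t_k + N, t_{k+1})$ that intersect the window $[r, r + (n-3)N + l)$, where $t_k = \sum_{i=1}^{k-1}(N + w_i)$. The key geometric fact is that every good interval with $k \in [3, \theta_n + 1]$ lies inside the window: the left endpoint $t_k \ge 2N$ exceeds $r < N + \tau < 2N$ (using $\tau < N$), while $t_k + N \le \theta_n(N + \tau) + N \le (n-4)N + N = (n-3)N \le r + (n-3)N + l$ by the definition of $\theta_n$. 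On each such good interval, the Bowen estimate \eqref{eq_gonvar} yields $\Phi(f^{t_k}(z), N) \le \Phi(y_k, N) + N\eta$, so summing over $k \in [3, \theta_n]$ contributes $\sum_{k=3}^{\theta_n} \Phi(y_k, N) + (\theta_n - 2) N\eta$. Each gap fully contained in the window has length $w_k < \tau$ and contributes at most $\tau \phi^+ < N\eta$ by \eqref{eq_nnest5}; with at most $n$ such gaps the total gap contribution is at most $nN\eta$. Together with the Bowen errors this accounts for the $2nN\eta$ term.

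The remaining pieces are the at most two boundary partials at $r$ and $r + (n-3)N + l$, each of length at most $N + \tau < 2N$ and hence contributing at most $2N\phi^+$, together with any additional good interval with $k > \theta_n$ whose good segment still fits inside the window. The number of such extras is controlled by the length bound $|K_{\mathrm{full}}| \cdot N \le (n-3)N + l$ combined with $\theta_n \ge (n-4)N/(N+\tau) - 1$, and their aggregate contribution is absorbed into the Bowen error $2nN\eta$ using $\tau \phi^+ < N\eta$ from \eqref{eq_nnest5} and $\tau \al < N\eta$ from \eqref{eq_nnest3}, together with at most one further $N\phi^+$. The main obstacle will be this careful accounting to keep the constant error term genuinely at $5N\phi^+$ and independent of $n$. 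Combining the uniform pointwise weight bound with the cardinality estimate $\sss(X,\tau,\delta)^{n-1}$ yields the claim.
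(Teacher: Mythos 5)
Your proposal takes essentially the same route as the paper: the cardinality step (push the separated set back by $f^{-r}$ into $Y(C^1,C^2)$ using $(n-3)N+l+r\le nN$ and invoke Lemma~\ref{uppnepest}) is identical, and the weight step uses the same decomposition into good blocks $[t_k,t_k+N)$ for $k\in[3,\theta_n]$ bounded via \eqref{eq_gonvar} plus a $\phi^+$ bound on the rest. The difference is purely in the bookkeeping of the ``rest''. You try to track gaps, boundary partials and extra good intervals separately, and you correctly anticipate that this makes the constant term hard to pin down; your claim that each boundary partial has length at most $N+\tau$ is in fact not right for the initial piece $[r,t_3)$, which can have length close to $2(N+\tau)$ since the good blocks only start at $k=3$. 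The paper sidesteps all of this by never splitting the complement of the good blocks: it just notes that this complement has total length $(n-3)N+l-(\theta_n-2)N\le (n-\theta_n)N$, and then uses the single elementary estimate $n-\theta_n<5+n\tau/N$ (which follows from \eqref{eq_thetan}) to get $(n-\theta_n)N\phi^+<5N\phi^+ + n\tau\phi^+\le 5N\phi^+ + nN\eta$, with \eqref{eq_nnest5} absorbing $n\tau\phi^+$. Combined with the Bowen error $(\theta_n-2)N\eta\le nN\eta$, this gives the stated $2nN\eta+5N\phi^+$ cleanly. So your approach is correct in substance; adopting the paper's lumped bound on the bad set would close the accounting gap you flagged.
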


\begin{proof}
Note that if $Q$ is an $((n-3)N+l,2\delta)$-separated set in 
$f^r(Y(C_{y_1\cdots y_n}^1, C_{w_1\cdots w_{n-1}}^2))$, then $f^{-r}(Q)$
contains an $((n-3)N+l+r,2\delta)$-separated set in
$Y(C_{y_1\cdots y_n}^1, C_{w_1\cdots w_{n-1}}^2)$. By Lemma \ref{uppnepest},
as 
$$(n-3)N+l+r\le (n-3)N+N+(N+\tau)\le nN,$$ 
we have
\begin{equation}\label{eq_frynepest}
\sss\left(f^r\left(Y(C_{y_1\cdots y_n}^1, C_{w_1\cdots w_{n-1}}^2)\right), 
(n-3)N+l,2\delta\right)\le\sss(X,\tau,\delta)^{n-1}.
\end{equation}

Suppose that $z\in f^r\left(Y(C_{y_1\cdots y_n}^1, C_{w_1\cdots w_{n-1}}^2)\right)$.
There is $y\in Y(C_{y_1\cdots y_n}^1, C_{w_1\cdots w_{n-1}}^2)$ such that
$f^r(y)=z$. 
%Let $$m:=\max\{k: t_k+N-1\le r+(n-3)N+l\}.$$
%By \eqref{eq_nnest3}, we have
%$$m>\frac{(n-4)N}{N+\tau}$$
%and
%$$(n-3)N+l-(m-2)N<(t_{m+1}+N-1-r)-(m-2)N<m\tau+3N. $$
By \eqref{eq_thetan}, we have
$$t_{\theta_n}+N\le \theta_n(N+\tau)+N\le(n-3)N+l$$
and
%By \eqref{eq_nnest6}, we have
$$n-\theta_n<n-\left(\frac{(n-4)N}{N+\tau}-1\right)
<5+\frac{(n-4)\tau}{N+\tau}<5+\frac{n\tau}{N}.$$
Then by \eqref{eq_gonvar} and \eqref{eq_nnest5}, we have
\begin{align}
&\Phi\left(z,(n-3)N+l\right) \notag\\
\le&
\sum_{k=3}^{\theta_n}\Phi\left(f^{t_k}(y),N\right)
+((n-3)N+l-(\theta_n-2)N)\phi^+ \notag
\\\le&\sum_{k=3}^{\theta_n}\left(\Phi\left(y_k,N\right)
+N\eta\right)
+((n-3)N+N-(\theta_n-2)N)\phi^+ \notag
\\ 
\le&\sum_{k=3}^{\theta_n}\Phi\left(y_k,N\right)
+(\theta_n-2)N\eta
+(n-\theta_n)N\phi^+ \notag
\\\le&\sum_{k=3}^{\theta_n}\Phi\left(y_k,N\right)+nN\eta+
(5N\phi^++n\tau\phi^+) \notag
\\\label{eq_zpotest}
\le&\sum_{k=3}^{\theta_n}\Phi\left(y_k,N\right)+2nN\eta+5N\phi^+.
\end{align}
The lemma follows from \eqref{eq_frynepest} and \eqref{eq_zpotest}.
\end{proof}

\begin{lemma}\label{le_upppresest}
$P(\Lambda,\phi,2\delta)\le\al+\eta_0$.
\end{lemma}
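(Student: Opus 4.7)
The plan is to reduce to bounding $\Theta(f^r(Y),\phi,m,2\delta)$ uniformly over $r\in\ZZ_{N+\tau}$ and then to cover $f^r(Y)$ by cylinder sets of the appropriate depth, applying Lemma \ref{le_cylpresest} to each. Since $\Lambda=\bigcup_{r=0}^{N+\tau-1}f^r(Y)$ is a finite union, any $(m,2\delta)$-separated set in $\Lambda$ partitions into at most $N+\tau$ pieces, each $(m,2\delta)$-separated inside some $f^r(Y)$, so
\[
\Theta(\Lambda,\phi,m,2\delta)\le(N+\tau)\max_{r\in\ZZ_{N+\tau}}\Theta(f^r(Y),\phi,m,2\delta).
\]
The factor $N+\tau$ does not survive the limsup after dividing by $m$, so it suffices to control each $\Theta(f^r(Y),\phi,m,2\delta)$.

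For $m$ sufficiently large, write $m=(n-3)N+l$ with $l\in\ZZ_N$ and $n>10$; only finitely many values of $m$ are missed. Every point of $f^r(Y)$ lies in some $f^r(Y(C^1_{y_1\ldots y_n},C^2_{w_1\ldots w_{n-1}}))$ determined by the first $n$ letters of the defining $\sC\in E^{\ZZP}$ and the first $n-1$ letters of $\rr\in\Sigma_\tau$. Partitioning an $(m,2\delta)$-separated set across these cylinders and invoking Lemma \ref{le_cylpresest} gives
\[
\Theta(f^r(Y),\phi,m,2\delta)\le\sum_{(y_1,\ldots,y_n)\in E^n}\sum_{(w_1,\ldots,w_{n-1})\in\ZZ_\tau^{n-1}}\sss(X,\tau,\delta)^{n-1}e^{\sum_{k=3}^{\theta_n}\Phi(y_k,N)+2nN\eta+5N\phi^+}.
\]
The sum over $(w_1,\ldots,w_{n-1})$ yields a factor $\tau^{n-1}$. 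The remaining sum over $E^n$ factorizes: indices $k\in\{3,\ldots,\theta_n\}$ each contribute $\sum_{y\in E}e^{\Phi(y,N)}\le e^{N(\al+\eta)}$ by \eqref{eq_epresest}, while the other $n-\theta_n+2$ indices each contribute a factor $|E|$. Since $\phi^-\ge 0$ forces $|E|\le\sum_{y\in E}e^{\Phi(y,N)}\le e^{N(\al+\eta)}$, the product collapses to $\tau^{n-1}\sss(X,\tau,\delta)^{n-1}e^{nN(\al+\eta)+2nN\eta+5N\phi^+}$.

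Taking logarithms, dividing by $m\sim nN$, and passing to the limsup yields
\[
P(\Lambda,\phi,2\delta)\le\frac{\ln\tau+\ln\sss(X,\tau,\delta)}{N}+(\al+\eta)+2\eta\le\al+4\eta\le\al+\eta_0,
\]
where the first summand is at most $\eta$ by \eqref{eq_nnest5} and the last inequality uses $\eta\le\eta_0/5$. The main obstacle is the careful bookkeeping of the \emph{boundary} indices $k\in\{1,2\}\cup\{\theta_n+1,\ldots,n\}$ on which Lemma \ref{le_cylpresest} offers no Bowen-type control; the need to render their total contribution negligible is precisely why $N$ must be chosen much larger than $\tau$ in \eqref{eq_nnest3} and why the cutoff $\theta_n$ in \eqref{eq_thetan} is set at roughly $nN/(N+\tau)$ rather than $n$.
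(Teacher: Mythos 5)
Your proposal follows essentially the same route as the paper: reduce to the finite union $\Lambda=\bigcup_{r<N+\tau}f^r(Y)$, decompose each $f^r(Y)$ into cylinder sets indexed by $E^n\times\ZZ_\tau^{n-1}$, apply Lemma \ref{le_cylpresest} to each cylinder, sum over the $\tau^{n-1}$ gap patterns, factorize the sum over $E^n$, and pass to the limsup using \eqref{eq_nnest5}. The bookkeeping and the final bound $\al+4\eta\le\al+\eta_0$ match the paper.

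One difference worth noting: you explicitly account for the boundary indices $k\in\{1,2\}\cup\{\theta_n+1,\ldots,n\}$, whose contribution to $\sum_{(y_1,\ldots,y_n)\in E^n}e^{\sum_{k=3}^{\theta_n}\Phi(y_k,N)}$ is the factor $|E|^{\,n-\theta_n+2}$, and you control it via $\phi^-\ge 0\Rightarrow |E|\le\sum_{y\in E}e^{\Phi(y,N)}\le e^{N(\al+\eta)}$, ending up with $e^{nN(\al+\eta)}$. The paper's displayed chain passes directly to $\bigl(\sum_{x\in E}e^{\Phi(x,N)}\bigr)^{\theta_n-2}$ and then $e^{(n-2)N(\al+\eta)}$, silently omitting the $|E|^{\,n-\theta_n+2}$ factor; your version supplies the missing step and is the cleaner of the two, though the asymptotics and conclusion are unaffected.
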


\begin{proof}
By %\eqref{eq_frynepest} and \eqref{eq_zpotest}, 
Lemma \ref{le_cylpresest} and \eqref{eq_epresest},
for $n>10$, we have
\begin{align*}
&\Theta(\Lambda,\phi,(n-3)N+l,2\delta)\\
\le&\sum_{r=0}^{N+\tau-1}\Theta(f^r(Y),\phi, (n-3)N+l,2\delta)
\\=&\sum_{r=0}^{N+\tau-1}\left(\sum_{E^n\times \ZZ_\tau^{n-1}}
%_{(y_1,\cdots, y_n)\in E^n,(w_1,\cdots, w_{n-1})\in\ZZ_\tau^{n-1}}
\Theta\left(f^r\left(Y(C_{y_1\cdots y_n}^1, C_{w_1\cdots w_{n-1}}^2)\right), 
\phi, (n-3)N+l,2\delta\right)\right)
\\\le&(N+\tau)\tau^{n-1}\left(\sum_{(y_1,\cdots, y_n)\in E^n}
\sss(X,\tau,\delta)^{n-1}e^{\sum_{k=3}^{\theta_n}
\Phi\left(y_k,N\right)+2nN\eta+5N\phi^+}
%{\sum_{k=3}^{\theta_n}\Phi\left(y_k,N\right)+\theta_nN\eta+(n-\theta_n)N\phi^+}
\right)
\\\le&(N+\tau)\tau^{n-1}\sss(X,\tau,\delta)^{n-1}e^{2nN\eta+5N\phi^+}
\left(\sum_{x\in E}e^{\Phi(x,N)}
\right)^{\theta_n-2}
\\\le&(N+\tau)\tau^{n-1}\sss(X,\tau,\delta)^{n-1}e^{2nN\eta+5N\phi^++(n-2)N(\al+\eta)}.
\end{align*}

For every $m>10N$, there is $n>10$ and $l\in\ZZ_N$ such that
$m=(n-3)N+l$. 
%Note that$$\Theta(\Lambda,\phi,m,2\delta)}\le\sum_{r=0}^{}$$
So finally by \eqref{eq_epresest} and \eqref{eq_nnest5} we have
\begin{align*}
P(Y,\phi,2\delta)=&\limsup_{m\to\infty}
\frac{\ln\Theta(\Lambda,\phi,m,2\delta)}{m}
\\\le&\limsup_{n\to\infty}
\frac{\ln\Theta(\Lambda,\phi,(n-3)N+l,2\delta)}{(n-3)N}
\\\le&\frac{\ln\tau+\ln\sss(X,\tau,\delta)}{N}+2\eta+(\al+\eta)
\\<&\eta+2\eta+(\al+\eta)
\\<&\al+\eta_0.
\end{align*}

\end{proof}

\section{Proof of the Main Results}
\subsection{Without asymptotic entropy expansiveness}
\begin{proposition}\label{pr_ctdense}
Suppose that $(X,f,\phi)$ satisfies CT$(\delta,\gamma,\ep)$. Then
$$\overline{\spp_e(X,f,\phi)}\supset
\left[\min\left\{P^*(\phi),\pe(\phi,\ep)\right\}, P(\phi)\right].$$
\end{proposition}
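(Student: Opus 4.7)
My plan is to extract the required ergodic measure from the compact invariant set produced by Proposition~\ref{pr_lambda}. Specifically, I apply that proposition to obtain $\Lambda\subset X$ with $P(\Lambda,\phi,2\delta)\approx\al$, use Remark~\ref{re_pest} to upgrade this to $P(\Lambda,\phi)\approx\al$, and then invoke the variational principle on $\Lambda$ to produce an ergodic measure with pressure close to $\al$. By Remark~\ref{re_ctimct} we may assume throughout that CT$(\delta,\gamma,\ep)$ holds with $16\delta<8\gamma<\ep$, so in particular $2\delta<\ep/2$ is admissible in Remark~\ref{re_pest}.

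\textbf{Main step.} First suppose $\al\in(\max\{P^*(\phi),\pe(\phi,\ep)\},P(\phi))$. Given $\eta_0>0$, set $\eta:=\tfrac{1}{3}\min\{\eta_0,\al-\pe(\phi,\ep)\}$ and apply Proposition~\ref{pr_lambda} to get a compact $f$-invariant $\Lambda$ with $|P(\Lambda,\phi,2\delta)-\al|<\eta$. Since the obstruction pressure of the restricted system $(\Lambda,f|_\Lambda)$ at scale $\ep$ is at most $\pe(\phi,\ep)<\al-\eta\le P(\Lambda,\phi,2\delta)\le P(\Lambda,\phi)$, Remark~\ref{re_pest} applied to $\Lambda$ yields $P(\Lambda,\phi)=P(\Lambda,\phi,2\delta)$, hence $|P(\Lambda,\phi)-\al|<\eta$. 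The variational principle on $\Lambda$ then furnishes $\mu\in\cm_e(\Lambda,f)\subset\cm_e(X,f)$ with $P_\mu(\phi)>P(\Lambda,\phi)-\eta$, so $|P_\mu(\phi)-\al|<2\eta\le\eta_0$.

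\textbf{Extending to the min.} To reach down to $\min\{P^*(\phi),\pe(\phi,\ep)\}$, one still has to cover $\al\in[\min\{P^*,\pe(\phi,\ep)\},\max\{P^*,\pe(\phi,\ep)\}]$, where the main step fails: either Proposition~\ref{pr_lambda} does not apply (when $\al\le P^*$, which forces $\pe<P^*$), or Remark~\ref{re_pest} does not equate $P(\Lambda,\phi,2\delta)$ with $P(\Lambda,\phi)$ (when $\al\le\pe(\phi,\ep)$, which forces $P^*<\pe$). In the first situation I would take $\al'$ just above $P^*(\phi)$, run the main step to obtain ergodic $\mu^{\al'}$ of pressure close to $\al'$, convexly combine with an ergodic measure $\nu$ of low pressure, and approximate the combination within $\cm(X,f)$ by ergodic measures, using that Lemma~\ref{le_gm} gives the gluing-orbit property of $\cg_M$ and thereby density of $\cm_e$ in $\cm(X,f)$. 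In the second situation I would exploit that $\Lambda$ inherits a gluing-orbit-type structure from $\cg_M$ and run a Katok-style construction internal to $\Lambda$ to produce ergodic measures on $\Lambda$ of intermediate pressure close to $\al$.

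\textbf{Main obstacle.} The principal difficulty is the absence of asymptotic entropy expansiveness, which costs the upper semi-continuity of $\mu\mapsto P_\mu(\phi)$: ergodic approximations of a convex combination of invariant measures need not preserve pressure, and any Katok-style construction internal to $\Lambda$ has to be executed without that approximation tool. Ensuring that the approximating ergodic measures land near the prescribed $\al$, rather than somewhere else in $[0,P(\Lambda,\phi)]$, is the step where I expect the proof to have to work hardest.
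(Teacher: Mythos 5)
Your \textbf{Main step} is exactly the paper's proof: apply Proposition~\ref{pr_lambda} to get $\Lambda$ with $|P(\Lambda,\phi,2\delta)-\al|<\eta$, note $P(\Lambda,\phi,2\delta)>\pe(\phi,\ep)$ and $2\delta<\ep/2$ so that Lemma~\ref{le_pest} (via Remark~\ref{re_pest}) applied to the subsystem $(\Lambda,f)$ gives $P(\Lambda,\phi)=P(\Lambda,\phi,2\delta)$, and then invoke the variational principle on $\Lambda$; your choice $\eta=\tfrac13\min\{\eta_0,\al-\pe(\phi,\ep)\}$ is in fact slightly more careful than the paper's $\eta=\eta_0/2$, which leaves the inequality $\al-\eta>\pe(\phi,\ep)$ implicit in the choice of the interval. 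Concerning your \textbf{Extending to the min} section: the ``$\min$'' in the statement is evidently a typo for ``$\max$''. The paper's own proof only treats $\al$ above both $P^*(\phi)$ and $\pe(\phi,\ep)$ (Proposition~\ref{pr_lambda} requires $\al>P^*(\phi)$, and the expansivity obstruction bound requires $\al-\eta>\pe(\phi,\ep)$), and the consequence actually drawn from this proposition, Corollary~\ref{co_noaee}, as well as the discussion in the introduction, state the inclusion with $\max\{P^*(\phi),\pe(\phi)\}$. So the range $\al\in[\min\{P^*,\pe(\phi,\ep)\},\max\{P^*,\pe(\phi,\ep)\}]$ that you try to reach is neither proved nor used in the paper, and your sketches for it (ergodic approximation of convex combinations via density of $\cm_e(X,f)$, or a Katok-style construction inside $\Lambda$) are indeed not complete arguments, for precisely the reason you identify: without asymptotic entropy expansiveness there is no upper semi-continuity to control where the approximating ergodic pressures land. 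For the intended ($\max$) statement, your main step alone is a complete proof and coincides with the paper's.
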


\begin{proof}
Let 
$$(\al-\eta_0,\al+\eta_0)\subset
\left[\min\left\{P^*(\phi),\pe(\phi,\ep)\right\}, P(\phi)\right]$$
and $\eta=\frac{\eta_0}2>0$. By Proposition \ref{pr_lambda}, there is a compact
invariant set $\Lambda$ such that
$$|P(\Lambda,\phi,2\delta)-\al|<\eta.$$
Then $P(\Lambda,\phi,2\delta)>\al-\eta>\pe(\phi,\ep)$.
As $2\delta<\frac{\ep}{2}$, we can apply Lemma \ref{le_pest} to the subsystem
$(\Lambda,f)$ and obtain
$$P(\Lambda,\phi)=P(\Lambda,\phi,2\delta)\in(\al-\eta,\al+\eta).$$
Then by the Variational Principle, there is an ergodic measure $\mu$ supported
inside the compact invariant set $\Lambda$ such that
$$P_\mu(\phi)\in(\al-\eta_0,\al+\eta_0).$$
This implies that the proposition holds.
\end{proof}

The following is a consequence of Proposition \ref{pr_ctdense} and
Remark \ref{re_ctimct}.

\begin{corollary}\label{co_noaee}
Suppose that $(X,f,\phi)$ has the Climenhaga-Thompson structure. Then
$$\overline{\spp(\phi)}\supset
\left[\max\left\{P^*(\phi),\pe(\phi)\right\}, P(\phi)\right].$$
\end{corollary}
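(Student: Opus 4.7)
The strategy is a straightforward reduction to Proposition \ref{pr_ctdense}, packaging together two independent scale-based approximations. The Climenhaga--Thompson structure triggers Remark \ref{re_ctimct}, producing a threshold $\ep_0>0$ such that CT$(\delta,\gamma,\ep)$ holds whenever $16\delta<8\gamma<\ep<\ep_0$. Simultaneously, the identity $\pe(\phi)=\inf_{\ep>0}\pe(\phi,\ep)$ allows us to push $\pe(\phi,\ep)$ arbitrarily close to $\pe(\phi)$ from above by shrinking $\ep$, using monotonicity of $\NE(\ep)$ in $\ep$.

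Given any $\al\in(\max\{P^*(\phi),\pe(\phi)\},P(\phi))$, I would first fix $\eta_0>0$ small enough that $(\al-\eta_0,\al+\eta_0)\subset(\max\{P^*(\phi),\pe(\phi)\},P(\phi))$. Next, using $\al-\eta_0>\pe(\phi)$, I select $\ep\in(0,\ep_0)$ with $\pe(\phi,\ep)<\al-\eta_0$, and then $\delta,\gamma$ satisfying $16\delta<8\gamma<\ep$. At this scale CT$(\delta,\gamma,\ep)$ is verified, and $\al$ simultaneously exceeds $P^*(\phi)$ and $\pe(\phi,\ep)$, so Proposition \ref{pr_ctdense} (whose proof via Proposition \ref{pr_lambda} and Lemma \ref{le_pest} in fact uses the maximum of these two quantities) yields an ergodic measure $\mu$ with $|P_\mu(\phi)-\al|<\eta_0$. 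Hence $\al\in\overline{\spp(\phi)}$.

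Letting $\al$ range over the open interval and taking closures then gives $\overline{\spp(\phi)}\supset[\max\{P^*(\phi),\pe(\phi)\},P(\phi)]$, since the closure of an open interval in $\RR$ is the corresponding closed interval. I do not expect any genuine obstacle in the argument: the proof is essentially bookkeeping to thread scales so that Remark \ref{re_ctimct} and the approximation of $\pe(\phi)$ by $\pe(\phi,\ep)$ hold simultaneously, which is possible precisely because each requires $\ep$ merely to be small enough. The only mild subtlety is verifying that the single parameter $\ep$ can be chosen small enough to meet both demands at once, but this is automatic from their independent monotonicities.
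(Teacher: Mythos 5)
Your proposal is correct and is exactly the approach the paper intends: Remark \ref{re_ctimct} supplies the threshold $\ep_0$, and the monotone convergence $\pe(\phi,\ep)\searrow\pe(\phi)$ lets you choose a single $\ep<\ep_0$ satisfying both $\pe(\phi,\ep)<\al-\eta_0$ and the CT$(\delta,\gamma,\ep)$ hypothesis, after which Proposition \ref{pr_ctdense} applies. You also correctly noticed that the ``$\min$'' in the statement of Proposition \ref{pr_ctdense} is a typo for ``$\max$'' --- its proof requires $\al-\eta>\pe(\phi,\ep)$ and $\al-\eta_0>P^*(\phi)$ simultaneously --- which is precisely the reading the corollary's statement presupposes.
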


%\begin{corollary}
%Suppose that $(X,f,\phi)$ has the Climenhaga-Thompson structure and $(X,f)$
%is asymptotically entropy expansive. Then
%$$\overline{\spp_e(X,f,\phi)}\supset
%\left[P^*(\phi), P(\phi)\right].$$
%\end{corollary}

\subsection{Proof of Theorem \ref{thmain}}
%\begin{proof}
Let 
$$(\al-\eta_0,\al+\eta_0)\subset
\left[\min\left\{P^*(\phi),\pe(\phi,\ep)\right\}, P(\phi)\right]$$
and $\eta=\frac{\eta_0}2>0$. By Remark \ref{re_ctimct} and asymptotically
entropy expansiveness, there are
$\delta,\gamma,\ep>0$ such that $(X,f,\phi)$ satisfies CT$(\delta,\gamma,\ep)$
and 
\begin{equation}\label{eq_tailbound}
h^*(f,2\delta)+\var(\phi,2\delta)<\eta.
\end{equation}
By Proposition \ref{pr_lambda}, there is a compact
invariant set $\Lambda$ such that
$$|P(\Lambda,\phi,2\delta)-\al|<\eta.$$
By \eqref{eq_tailbound} and Lemma \ref{le_eppres}, we have
$$P(\Lambda,\phi,2\delta)\le P(\Lambda,\phi)\le 
P(\Lambda,\phi,2\delta)+h^*(f,2\delta)+\var(\phi,2\delta)<\al+\eta_0.$$
By Lemma \ref{le_uppercts}, 
there is an ergodic measure $\mu$ supported
inside $\Lambda$ such that
$$P_\mu(\phi)=P(\Lambda,\phi)\in(\al-\eta_0,\al+\eta_0).$$
The result follows.

\section*{Acknowledgments}
The author is supported by National Natural Science Foundation of China (No. 11571387)
and CUFE Young Elite Teacher Project (No. QYP1902).
The author would like to thank Weisheng Wu for helpful discussions.

%+Bibliography

%-Bibliography

\end{document}